\newtheorem{thm}{Theorem}[section]
\newtheorem{lem}[thm]{Lemma}
\newtheorem{prop}[thm]{Proposition}
\newtheorem{cor}[thm]{Corollary}
\theoremstyle{definition}
\newtheorem{defn}[thm]{Definition}
\newtheorem{rem}[thm]{Remark}
\newtheorem{conj}{Conjecture}
\newcommand{\step}[2]{%
  \par
  \addvspace{\medskipamount}%
  \noindent\emph{Step~#1: #2}\par\nobreak\smallskip
  \@afterheading
}
\DeclarePairedDelimiter{\abs}{\lvert}{\rvert}
\newcommand{\suchthat}{\ifnum\currentgrouptype=16 \mathrel{}\middle|\mathrel{}\else\mid\fi}
\renewcommand{\thefootnote}{\fnsymbol{footnote}}
\begin{document}

\title{On the pole placement of scalar linear delay systems with two delays}

\author{Sébastien Fueyo\footnotemark[1] \and Guilherme Mazanti\footnotemark[1] \and Islam Boussaada\footnotemark[1]\ \footnotemark[3] \and Yacine Chitour\footnotemark[2] \and Silviu-Iulian Niculescu\footnotemark[1]}
\maketitle

\footnotetext[1]{Universit\'e Paris-Saclay, CNRS, CentraleSup\'elec, Inria, Laboratoire des signaux et syst\`emes, 91190, Gif-sur-Yvette, France.}
\footnotetext[2]{Universit\'e Paris-Saclay, CNRS, CentraleSup\'elec, Laboratoire des signaux et syst\`emes, 91190, Gif-sur-Yvette, France.}
\footnotetext[3]{Institut Polytechnique des Sciences Avancées (IPSA), 63 boulevard de Brandebourg, 94200 Ivry-sur-Seine, France.}

\begin{abstract}
This paper concerns some spectral properties of the scalar dynamical system defined by a linear delay-differential equation with two positive delays. More precisely, the existing links between the delays and the maximal multiplicity of the characteristic roots are explored, as well as the dominance of such roots compared with the spectrum localization. As a by-product of the analysis, the pole placement issue is revisited with more emphasis on the role of the delays as control parameters in defining a partial pole placement guaranteeing the closed-loop stability with an appropriate decay rate of the corresponding dynamical system.
\end{abstract}

\medskip

{\small
\noindent \textbf{Keywords:} delay dynamics; multiplicity-induced-dominancy (MID); pole placement.
}

\renewcommand{\thefootnote}{\arabic{footnote}}

\section{Introduction}

The stability and control of time-delay systems are problems of recurring interest since delays may appear as appropriate means in modeling transport and propagation in interconnected cyber-physical systems (subject or not to communication constraints), modeling latency effects due to finite signal propagation and/or processing speed in open and closed-loop systems, or representing incubation periods, maturation times, age structure in population dynamics. It is well-accepted by now that delays may have two complementary and contradictory facets. On the one hand, delays induce dynamics instability as well as bad behaviors and performances of the control schemes, but, on the other hand, a delay can also be seen as a \emph{(controller) parameter\/} that can eliminate instabilities; see, for instance, \cite{Sipahi2011Stability} for a deeper discussion on delay models and the dichotomous character mentioned above. There exists an abundant literature on the stability and control of dynamical systems described by delay-differential equations and, without being exhaustive, we refer to \cite{1963differential,diekmann,gu2003stability,hale1993introduction,michiels2014stability,stepan1989retarded}.

In the sequel, we focus on two aspects: firstly, using delay as a \emph{control parameter\/} not only for stabilizing purposes but also for improving closed-loop performance, and secondly, extending a method for partial pole placement, called \emph{multiplicity-induced-dominancy\/} (MID), from the case of single delay to that of two delays.

The idea of using a delay as a control parameter is not new and there exist several results and methods in the literature. In particular, controlling chains of integrators/oscillators by a chain of $n$ delay blocks\footnote{A \emph{delay block\/} simply represents a couple formed by a ``delay'' and a ``gain''.} showed that a simple control structure can be used to stabilize such systems (see, for instance, \cite{Niculescu2004Stabilizing,KNMM2005}). Moreover, it is commonly accepted that the delays do not necessarily improve the dynamics properties of the closed-loop systems. In the sequel, we will discuss such ideas throughout an extremely simple setting. More precisely, in the case of an integrator controlled by two delay blocks, we will see that the spectral abscissa of the closed-loop system can be improved by using small gains and small delays in the control law. In other words, the second delay block turns out to be useful for improving the closed-loop system's performances, at least in terms of pole placement. We believe that such ideas can be extended to more general configurations but this research angle is out of the scope of our paper.

To explain better the second concept, that of MID, consider the differential equation
\begin{equation}
\label{eq:1}
y^{(n)}(t)+\sum\limits_{j=0}^N \sum\limits_{k=0}^{n-1} a_{k,j} y^{(k)}(t-\tau_j)=0,
\end{equation}
where $n,N$ are positive integers, $a_{k,j}$ is real for $k \in \{0,\dotsc,n-1 \}$ and $j \in \{0,\dotsc,N \} $, $\tau_0=0$, and $\tau_j$ for $j \in \{1,\dotsc,N\}$ are positive real numbers representing the delays of the equation. Equation~\eqref{eq:1} is known as a \emph{delay-differential equation} of \emph{retarded\/} type (see, e.g.,\cite{hale1993introduction} for the corresponding classification). It is well-known, see for instance \cite[Corollary~6.1, p.~215, Chapter~7]{hale1993introduction}, that the trivial solution of the differential equation~\eqref{eq:1} is exponentially stable if and only if the holomorphic function
\begin{equation}
\label{eq:2}
   \Delta_0(s)= \sum\limits_{j=0}^N p_j(s) e^{-s \tau_j}
\end{equation}
never vanishes in a right-half plane
\begin{equation}
\label{eq:right-plane}
R_{\alpha}:=\{ s\in \mathbb{C} \mid \Re(s) \ge \alpha\}
\end{equation}
for some strictly negative real number $\alpha$, where we have set $p_0(s)=s^n+\sum\limits_{k=0}^{n-1} a_{k,0}s^k$ and $p_j(s)=\sum\limits_{k=0}^{n-1} a_{k,j}s^k$ for $j \in \{1,\dotsc,N\}$. In other words, the stability of the system~\eqref{eq:1} depends on the location of the roots of $\Delta_0(\cdot)$ in the complex plane. It is also worth mentioning that $\Delta_0$, known as the \emph{characteristic function} of \eqref{eq:1}, has an infinite number of roots, which are called \emph{characteristic roots} of \eqref{eq:1}.

Holomorphic functions of the form \eqref{eq:2} are known as \emph{quasipolynomials}. For a quasipolynomial under the form \eqref{eq:2}, one usually defines its \emph{degree} $D$ as $D = N + \sum_{j=0}^N d_j$, where, for $j \in \{1, \dotsc, N\}$, $d_j$ denotes the degree of the polynomial $p_j$. For a root $\lambda \in \mathbb{C}$ of a quasipolynomial, this root has \emph{maximal multiplicity} if there is not a root with a strictly greater multiplicity than the multiplicity of $\lambda$. The Pólya--Szeg\H{o} bound from \cite[Part Three, Problem~206.2]{Polya1998Problems} asserts that the degree $D$ of $\Delta_0$ turns out to be a sharp upper bound for the multiplicity of a root of the quasipolynomial $\Delta_0(\cdot)$. A root $\lambda \in \mathbb{C}$ of a quasipolynomial is said to be \emph{strictly dominant} (resp., \emph{dominant}) if there is not a distinct root with a real part greater than or equal to (resp., strictly greater than) the real part of $\lambda$. When a quasipolynomial admits a dominant root, then this root determines the stability of the associated delay-differential equation. 

The relations between the notions of roots of maximal multiplicity and dominance have been explored in some recent works, such as \cite{BoussaadaGeneric, mazanti2021multiplicity, MBNC, Boussaada_ESAIM}, in which it is shown that, for some particular classes of quasipolynomials, a root with maximal multiplicity is dominant. More precisely, we say that a quasipolynomial verifies the \emph{multiplicity-induced-dominancy} (MID) property if it possesses a (strictly) dominant root of maximal multiplicity, and the previous references prove the MID property for some quasipolynomials of the form \eqref{eq:2} with $N = 1$, corresponding thus to delay-differential equations of the form \eqref{eq:1} with a single delay. To the best of the authors' knowledge, extensions of the MID property to the case of multiple delays have not yet been considered in the literature. This paper addresses such an extension in one of the simplest configuration, a scalar delay-differential equation including two delays.

One of the motivations for the study of the MID property comes from control theory, and more precisely from the stabilization of systems with a time delay. Indeed, the problems of stabilizing or improving the stability of a system can be seen from the spectral point of view as the problem of ``pushing'' the dominant root far away from the imaginary axis in the complex left half-plane, which can be achieved with a delayed feedback by selecting the free parameters in order to guarantee the existence of a root of maximal multiplicity, which will then necessarily be dominant if the MID property holds, a strategy used in \cite{boussaada:hal-02897102,Boussaada_ESAIM,mazanti2021multiplicity}.

In this paper, we focus on the MID property for the scalar equation with two delays
\begin{equation}
\label{eq:1bis'}
y'(t)+a_0y(t)+a_1y(t-\tau_1)+a_2y(t-\tau_2)=0,
\end{equation}
whose associated characteristic quasipolynomial is
\begin{equation}
\label{eq:2bis'}
    \Delta(s)=s+a_0+a_1 e^{-s\tau_1}+a_2 e^{-s\tau_2}.
\end{equation}
From a stability viewpoint, it is not an easy task to check when the quasipolynomial defined in \eqref{eq:2bis'} is nonvanishing on one of a complex right half-plane $R_{\alpha}$ of the form \eqref{eq:right-plane} with $\alpha < 0$ and to study how the stability property depends on the five parameters $a_0$, $a_1$, $a_2$, $\tau_1$, and $\tau_2$. A numerical investigation of the stability regions of the equation~\eqref{eq:1bis'} was previously carried out in \cite{Boullu,mahaffy1995geometric}. A qualitative analysis was proposed by \cite{hale-huang:93}. Unfortunately, the corresponding ideas cannot be naturally extended to more general quasipolynomials including two delays. Inspired by the triangle geometry, a different angle was adopted in \cite{gnc:05jmaa}, where the authors introduced an appropriate classification of the so-called \emph{frequency-sweeping curves\/}, leading to a characterization of the stability regions in the delay-parameter space. The proposed method is simple and easy to use but it does not allow explicitly addressing the case of multiple characteristic roots. Still in a geometric framework, an extension of the approach to handle some ``less degenerate'' double roots was proposed by \cite{irofti-et-al:18}, but with no attempt to address some generic multiplicities. Finally, by using a different method based on the Weierstrass--Malgrange preparation theorem, \cite{MMN:20-ifac} has analyzed the asymptotic behavior of the double characteristic roots in a more general setting.

The contribution of the present paper is fourfold: \begin{enumerate*}[label={\emph{(\roman*)}}]\item it extends the MID property to the case of multiple delays by showing the validity of the effective dominancy of a spectral value with maximal multiplicity for the scalar equation with two delays~\eqref{eq:1bis'}; \item from a control-oriented motivation, we provide an appropriate \emph{rightmost root assignment\/} approach emphasizing the interest in considering the delays as \emph{control parameters\/} to improve the decay rate of the closed-loop system; \item we propose a new method to address multiplicity issues of the corresponding characteristic functions. To the best of the authors' knowledge, such a method represents a novelty in the literature and we believe that it can be appropriately extended to more complicated case studies; \item it gives some insights on the minimization of the spectral abscissa of \eqref{eq:2bis'}. More precisely, we exploit the dominance of the spectral value with the maximal admissible  multiplicity rather than to generically characterize the rightmost spectral value and then optimize it, which is a technically challenging question. Such an idea opens an interesting perspective in using small delays and small gains to optimize the spectral abscissa.\end{enumerate*}

The remaining of the paper is organized as follows: Section~\ref{sec:defs} recalls some important definitions and facts on the stability of retarded time-delay systems. The statement of our main result is provided in Section~\ref{sec:main-res}, which is illustrated by an example in Section~\ref{sec:example}. Section~\ref{sec:4} illustrates how the stability of a controlled system defined by a simple integrator can be improved by choosing two delays in the control when we have a constraint on the gains. Concluding remarks and insights opening new perspectives in the optimization of the trivial solution decay for time-delay systems are presented in Section~\ref{sec:conclusion}. Finally, the technical propositions necessary to prove the main results are proved in Appendix~\ref{appen:0}.

\paragraph*{Notations.} In this paper, the sets of positive integers, real numbers, nonzero real numbers, complex numbers, and nonzero complex numbers are denoted, respectively, by $\mathbb N$, $\mathbb R$, $\mathbb R^\ast$, $\mathbb C$, and $\mathbb C^\ast$. Given $s \in \mathbb C$, its real and imaginary parts are denoted, respectively, by $\Re(s)$ and $\Im(s)$. We denote the open ball in $\mathbb C$ centered at some point $a \in \mathbb C$ and with radius $R > 0$ by $B(a, R)$.

\section{Definitions and prerequisites}
\label{sec:defs}

\subsection{Quasipolynomials and the P\'{o}lya--\texorpdfstring{Szeg\H{o}}{Szego} bound}

In this section, we briefly recall the result from \cite[Part Three, Problem~206.2]{Polya1998Problems} on the location of roots of quasipolynomials and provide some of its consequences. Recall that a \emph{quasipolynomial} $Q$ is an entire function $Q: \mathbb C \to \mathbb C$ given by
\begin{equation}
\label{GenericQuasipolynomial}
Q(s) = \sum_{j = 0}^N P_j(s) e^{r_j s},
\end{equation}
where $N$ is a nonnegative integer, $s\in \mathbb{C}$, $r_0, \dotsc, r_N$ are pairwise distinct real numbers, and, for $j \in \{0, \dotsc, N\}$, $P_j$ is a nonzero polynomial with complex coefficients of degree $d_j \geq 0$. The \emph{degree} of $Q$ is the integer $D = N + \sum_{j=0}^N d_j$. The result from \cite[Part Three, Problem~206.2]{Polya1998Problems} is the following.

\begin{prop}
\label{PropPolyaSzego}
Let $Q$ be a quasipolynomial of degree $D$ given under the form \eqref{GenericQuasipolynomial}, $\alpha, \beta \in \mathbb R$ be such that $\alpha \leq \beta$, and $r_\delta = \max_{j, k \in \{0, \dotsc, N\}} r_j - r_k$. Let $m_{\alpha, \beta}$ denote the number of roots of $Q$ contained in the set $\{s \in \mathbb C \suchthat \alpha \leq \Im(s) \leq \beta\}$ counting multiplicities. Then
\[
\frac{r_\delta (\beta - \alpha)}{2 \pi} - D \leq m_{\alpha, \beta} \leq \frac{r_\delta (\beta - \alpha)}{2 \pi} + D.
\]
\end{prop}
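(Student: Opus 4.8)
The plan is to count the roots in the horizontal strip by the argument principle applied to a large rectangle, controlling the contributions of its four sides separately. First I would note that, since $r_0, \dotsc, r_N$ are pairwise distinct, the term of largest exponent dominates $Q$ as $\Re(s) \to +\infty$ and the term of smallest exponent dominates as $\Re(s) \to -\infty$; in particular $Q$ cannot vanish when $\lvert\Re(s)\rvert$ is large, so the roots lying in the strip $\{\alpha \le \Im(s) \le \beta\}$ all have bounded real part, $m_{\alpha,\beta}$ is finite, and they are contained in a rectangle $\mathcal R_T = \{\lvert\Re(s)\rvert \le T,\ \alpha \le \Im(s) \le \beta\}$ once $T$ is large. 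Choosing $T$ large enough that $Q$ does not vanish on the vertical sides (and, up to an arbitrarily small enlargement, on the horizontal sides), the argument principle gives $2\pi m_{\alpha,\beta}$ equal to the net increment of $\arg Q$ along the positively oriented boundary of $\mathcal R_T$.

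Next I would evaluate the two vertical contributions as $T \to \infty$. Writing $r_{\max} = \max_j r_j$ and $r_{\min} = \min_j r_j$, and letting $P_{\max}$ denote the polynomial attached to $r_{\max}$, on the right side $s = T + i\tau$ we have $Q(s) = (1 + o(1)) P_{\max}(s)\, e^{r_{\max} s}$, so as $\tau$ runs from $\alpha$ to $\beta$ the argument changes by $r_{\max}(\beta - \alpha)$ up to the variation of $\arg P_{\max}(T + i\tau)$, which tends to $0$ as $T \to \infty$; symmetrically, the left side contributes $-r_{\min}(\beta - \alpha)$. Their sum is exactly $r_\delta(\beta - \alpha)$, the main term of the asserted bound.

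The crux is to bound, uniformly in $T$, the net variation of $\arg Q$ along each horizontal side $\Im(s) = c$ with $c \in \{\alpha, \beta\}$. Here I would use the classical zero-counting lemma for real exponential sums: a function $f(t) = \sum_{j=0}^N \rho_j(t)\, e^{r_j t}$, with $\rho_j$ real polynomials of degree at most $d_j$, has at most $N + \sum_{j=0}^N d_j = D$ real zeros counted with multiplicity. This follows by induction on $N$: factoring out $e^{r_0 t}$ and differentiating $d_0 + 1$ times annihilates the term $\rho_0$ while preserving the degrees of the remaining polynomials, and by Rolle's theorem each differentiation lowers the zero count by at most one, so the inductive hypothesis applied to the resulting sum of $N$ exponentials closes the argument. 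Fixing a direction $\phi$, the real function $v_\phi(t) = \Im\!\left(e^{-i\phi} Q(t + ic)\right)$ is precisely such a sum, hence has at most $D$ zeros on $\mathbb R$; these are exactly the parameters at which the planar curve $t \mapsto Q(t + ic)$ meets the line $\mathbb R e^{i\phi}$ through the origin. Between two consecutive such crossings the curve stays in one open half-plane, so the continuous argument of $Q$ is confined to an interval of length $\pi$ and its net change over that subinterval is at most $\pi$ in absolute value. Summing over the at most $D$ subintervals bounds each horizontal contribution by $\pi D$, and combining the four sides yields
\[
\left\lvert\, 2\pi m_{\alpha,\beta} - r_\delta(\beta - \alpha) \,\right\rvert \le 2\pi D,
\]
which is the claim.

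The main obstacle is exactly this last conversion: turning the zero count of $v_\phi$ into a clean bound of $\pi D$ on the horizontal argument variation, rather than the $\pi(D+1)$ that a naive count of subintervals would give. This requires choosing $\phi$ so that the curve is transverse to $\mathbb R e^{i\phi}$ and begins or ends off that line, and checking that the two unbounded end-pieces of each horizontal side contribute nothing in the limit $T \to \infty$, since there the argument of $Q$ converges to a constant fixed by the dominant exponential. The remaining points — excluding roots on the boundary lines by an infinitesimal enlargement, and verifying the $o(1)$ estimates on the vertical sides — are routine.
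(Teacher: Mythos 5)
The paper itself does not prove this proposition --- it quotes it from P\'olya--Szeg\H{o} --- so your attempt has to be judged against the classical argument, whose architecture you have reproduced correctly: argument principle on a large rectangle, dominant-exponential asymptotics on the two vertical sides (giving the main term $r_\delta(\beta-\alpha)$), a Rolle-type induction showing that a real exponential sum $\sum_j \rho_j(t)e^{r_j t}$ has at most $D$ real zeros, and a small enlargement of the strip to keep roots off the contour. All of those steps are sound as you describe them.

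The genuine gap is exactly at the point you yourself flag as the crux, and your proposed repair does not work. You claim that the two unbounded end-pieces of a horizontal side ``contribute nothing in the limit $T \to \infty$'' because $\arg Q$ converges at $\pm\infty$. That is false: if $t_k$ is the largest zero of $v_\phi$ on the line $\Im(s)=c$, the argument change over $[t_k, T]$ converges to $\theta_+ - \theta(t_k)$, where $\theta_+$ is the limiting argument; this is a fixed quantity that can be arbitrarily close to $\pi$, and it is not removed by taking $\phi$ transverse to the curve (nor by taking $\phi$ equal to the asymptotic direction, in which case the end-piece change is $0$ or $\pm\pi$). So your bookkeeping still yields $\pi(D+1)$ per horizontal side and hence $D+1$ in the final inequality, which is strictly weaker than the statement. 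The standard way to get the sharp bound $\pi D$ is to choose $\phi$ \emph{after} looking at the curve: let $\theta$ be a continuous argument of $Q$ along the whole horizontal line (it is bounded, having finite limits at $\pm\infty$), and suppose its range were an interval of length greater than $\pi D$; choosing $\phi$ so that a point of $\phi + \pi\mathbb{Z}$ lies just inside the lower end of that range places at least $D+1$ points of $\phi + \pi\mathbb{Z}$ in its interior, each of which is a value attained by $\theta$, producing at least $D+1$ distinct zeros of $v_\phi$ and contradicting the Rolle bound. Hence the range of $\theta$, and a fortiori the net variation over every $[-T,T]$, is at most $\pi D$, which is what the final estimate requires. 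With that replacement your proof closes; without it, it proves only the weaker bound with $D+1$.
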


In the sequel, we shall need the following consequence of Proposition~\ref{PropPolyaSzego}, which provides the maximal possible multiplicity of a root of a quasipolynomial.

\begin{cor}
\label{coro:max-mult}
Let $Q$ be a quasipolynomial of degree $D$. Then any root $s_0 \in \mathbb C$ of $Q$ has multiplicity at most $D$. In addition, if $s_0 \in \mathbb C$ is a root of $Q$ of multiplicity exactly equal to $D$, then it is the unique root of $Q$ in the set
\begin{equation}
\label{eq:exclusion-roots}
\left\{s \in \mathbb C \suchthat \abs*{\Im(s) - \Im(s_0)} < \frac{2\pi}{r_\delta}\right\},
\end{equation}
where $r_\delta$ is defined as in the statement of Proposition~\ref{PropPolyaSzego}.
\end{cor}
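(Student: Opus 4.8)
The plan is to derive both assertions directly from the Pólya--Szegő bound (Proposition~\ref{PropPolyaSzego}), the whole point being a judicious choice of the horizontal strip $\{s \in \mathbb C \suchthat \alpha \le \Im(s) \le \beta\}$ to which the bound is applied. Throughout I assume $N \ge 1$, so that $r_\delta > 0$ and the quantity $2\pi/r_\delta$ is well defined; the case $N = 0$ is trivial, since then $Q$ equals a polynomial times a nowhere-vanishing exponential, the strip in \eqref{eq:exclusion-roots} is all of $\mathbb C$, and both claims reduce to elementary facts about polynomials.

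For the first assertion, let $s_0$ be a root of $Q$ of multiplicity $\mu$. I would apply the upper bound of Proposition~\ref{PropPolyaSzego} with the \emph{degenerate} choice $\alpha = \beta = \Im(s_0)$, for which the strip collapses to the horizontal line through $s_0$. This line contains $s_0$, so the root count $m_{\alpha, \beta}$ is at least $\mu$, while the proposition gives $m_{\alpha, \beta} \le r_\delta(\beta - \alpha)/(2\pi) + D = D$. Hence $\mu \le D$, as claimed.

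For the second assertion, suppose $s_0$ has multiplicity exactly $D$ and, arguing by contradiction, that $Q$ has a further root $s_1 \ne s_0$ lying in the open strip of \eqref{eq:exclusion-roots}, i.e.\ with $\abs{\Im(s_1) - \Im(s_0)} < 2\pi/r_\delta$. I would then apply Proposition~\ref{PropPolyaSzego} to the \emph{smallest} horizontal strip containing both roots, namely $\alpha = \min\{\Im(s_0), \Im(s_1)\}$ and $\beta = \max\{\Im(s_0), \Im(s_1)\}$. On the one hand this strip contains $s_0$, of multiplicity $D$, and $s_1$, of multiplicity at least $1$, so $m_{\alpha, \beta} \ge D + 1$. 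On the other hand $\beta - \alpha = \abs{\Im(s_1) - \Im(s_0)} < 2\pi/r_\delta$, whence the upper bound yields $m_{\alpha, \beta} < D + 1$, and, since $m_{\alpha, \beta}$ is an integer, $m_{\alpha, \beta} \le D$. These two inequalities are incompatible, giving the desired contradiction and proving uniqueness.

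Since essentially all the analytic content is packaged inside Proposition~\ref{PropPolyaSzego}, I do not expect a genuine obstacle here; the argument is bookkeeping. The only points requiring care are \emph{(a)} allowing the degenerate strip $\alpha = \beta$, so that the bound can be read off on a single horizontal line, and \emph{(b)} recognising that a root of maximal multiplicity $D$ leaves no room for any other root in a strip of width strictly below $2\pi/r_\delta$, because the proposition's slack term $r_\delta(\beta - \alpha)/(2\pi)$ is then strictly less than $1$. Matching the strict inequalities correctly (the open strip in \eqref{eq:exclusion-roots} against the strict bound $\beta - \alpha < 2\pi/r_\delta$) is the one place where one must be slightly attentive.
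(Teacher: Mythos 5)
Your proof is correct and takes essentially the same approach as the paper: both parts are direct applications of Proposition~\ref{PropPolyaSzego}, first with the degenerate strip $\alpha = \beta = \Im(s_0)$, and then with a horizontal strip of width strictly less than $2\pi/r_\delta$, where the slack term $r_\delta(\beta-\alpha)/(2\pi)$ is strictly below $1$ so that the integer root count is at most $D$. The only cosmetic difference is that the paper sweeps $\beta$ over $\left(\Im(s_0), \Im(s_0)+\frac{2\pi}{r_\delta}\right)$ (and symmetrically below $\Im(s_0)$) to conclude uniqueness directly, whereas you fix the single minimal strip containing a hypothetical second root and argue by contradiction.
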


\begin{proof}
Let $s_0 \in \mathbb C$ be a root of $Q$. Letting $\beta = \alpha = \Im(s_0)$ in Proposition~\ref{PropPolyaSzego}, one deduces that the number of roots $s$ of $Q$ with $\Im(s) = \Im(s_0)$, counted according to their multiplicities, is at most $D$. Hence, in particular, $s_0$ has multiplicity at most $D$.

Assume that the multiplicity of $s_0$ is equal to $D$. Let $\alpha = \Im(s_0)$ and $\beta \in \left(\alpha, \alpha + \frac{2\pi}{r_\delta}\right)$. By Proposition~\ref{PropPolyaSzego}, $m_{\alpha, \beta} < D + 1$ and, since $m_{\alpha, \beta}$ is an integer, we have $m_{\alpha, \beta} \leq D$. Since $s_0$ has multiplicity $D$, we deduce that $s_0$ is the unique root of $Q$ whose imaginary part belongs to $[\alpha, \beta]$. As $\alpha = \Im(s_0)$ and $\beta \in \left(\alpha, \alpha + \frac{2\pi}{r_\delta}\right)$ is arbitrary, we deduce that $s_0$ is the unique root of $Q$ whose imaginary part belongs to $\left[\Im(s_0), \Im(s_0) + \frac{2\pi}{r_\delta}\right)$. Repeating this argument now with $\beta = \Im(s_0)$ and $\alpha \in \left(\beta - \frac{2\pi}{r_\delta}, \beta\right)$, we obtain that $s_0$ is the unique root of $Q$ in the set defined in \eqref{eq:exclusion-roots}.
\end{proof}

\subsection{Control and optimization problem}

For $a_0 \in \mathbb{R}$, we consider the control system
\begin{equation}
\label{eq:1bis}
y'(t) + a_0 y(t) = u(t)
\end{equation}
with the delayed feedback
\begin{equation}
\label{eq:1bis_control}
  u(t)=  \sum\limits_{i=1}^{N}a_i y(t-\tau_i),
\end{equation}
where $N$ a positive integer, $a_i\in\mathbb{R}$ for $i \in \{1, \dotsc, N\}$, and solutions $t \mapsto y(t)$ are real-valued functions of time $t$. We have $0<\tau_1<\dotsb<\tau_N$, which represent the delays of the system.
 Since the system~\eqref{eq:1bis} has constant coefficients, we can take $t_0=0$ as initial time of the system and, if we take as initial data a continuous function $\phi:[-\tau_N,0] \rightarrow \mathbb{R}$, there exists an unique continuous solution $y: [-\tau_N,+\infty)  \rightarrow \mathbb{R}$ such that $y(\theta)=\phi(\theta)$ for $\theta \in [-\tau_N,0]$ and $y(t)$ satisfies the equation~\eqref{eq:1bis} for $t \ge 0$. We are interested to govern the asymptotic behavior of the solutions of \eqref{eq:1bis} with the delayed feedback $u$.

\begin{defn}
\label{definition1}
For $\alpha_0 \in \mathbb{R}$, the system~\eqref{eq:1bis} is said to have  \emph{exponential rate} $\alpha_0$ if there exists a constant $C > 0$ such that
\begin{equation*}
\abs{y(t)} \le C e^{\alpha_0 t} \sup_{ -\tau_1 \le \theta \le0} \abs{y(\theta)}, \qquad t \ge 0,
\end{equation*}
for all $y(\cdot)$ continuous solution of the system~\eqref{eq:1bis}.

The \emph{best exponential rate} $\alpha$ of the system~\eqref{eq:1bis} is the infimum of its exponential rates, i.e., 
\begin{equation*}
\alpha:= \inf \{ \alpha_0 \in \mathbb{R} \mid \alpha_0\text{ is an exponential rate of \eqref{eq:1bis}}\}. 
\end{equation*}
Moreover, if $\alpha<0$, the origin of the system~\eqref{eq:1bis} is said to be \emph{exponentially stable} with \emph{decay rate} $\alpha$.
\end{defn}

The best decay rate of system~\eqref{eq:1bis} can be fully characterized in the frequency domain through the spectral abscissa. The characteristic quasipolynomial corresponding to \eqref{eq:1bis}--\eqref{eq:1bis_control} is
\begin{equation}
\label{eq:2bis}
    \Delta(s)=s+a_0-\sum\limits_{i=1}^{N}a_i  e^{-s\tau_i}.
\end{equation}
Note that, by Corollary~\ref{coro:max-mult}, any root of $\Delta$ has multiplicity at most $N + 1$.

\begin{defn} A root $s_0 \in \mathbb{C}$ of $\Delta(\cdot)$ is called a \emph{spectral value} of \emph{multiplicity} at least $m \in \mathbb{N}$ if $\Delta^{(k)}(s_0)=0$ for $k \in \{0, \dotsc, m-1\}$, where $\Delta^{(k)}$ denotes the $k$-th derivative of $\Delta$. A root $s_0$ has \emph{maximal multiplicity} if there is no other root of $\Delta$ with larger multiplicity. The \emph{spectral abscissa} $a$ of the system~\eqref{eq:1bis} is defined as
\begin{equation*}
a:=\sup \{ \Re(s) \mid s \in \mathbb{C} \text{ and }\Delta(s)= 0\}.
\end{equation*}
\end{defn}

For system~\eqref{eq:1bis}, it turns out that the spectral abscissa $a$ is equal to the best exponential rate $\alpha$, see, e.g., \cite[Chapter~7, Lemma~6.2 and Theorem~6.1]{hale1993introduction}).

We now formalize the notion of MID property.

\begin{defn}
The quasipolynomial $\Delta(\cdot)$ is said to verify the \emph{multiplicity-induced-do\-mi\-nancy} (MID) property if it has a root $s_0$ with maximal multiplicity which is strictly dominant, i.e., for all $s\in \mathbb{C}$
\begin{equation}
\label{eq:4}
\Delta(s)=0 \implies \Re(s)<\Re(s_0) \text{ or } s=s_0.
\end{equation}
\end{defn}
Note that, if $s_0$ is a dominant root of a quasipolynomial, then its spectral abscissa is $\Re(s_0)$.

Since the spectral abscissa of \eqref{eq:1bis}--\eqref{eq:1bis_control} determines the exponential decay rate of its solutions, a natural question in control theory is whether one may select the available parameters of the system in order to minimize its spectral abscissa.

\begin{defn}
Let $a_0 \in \mathbb R$ and $\tau_1, \dotsc, \tau_N$ be positive real numbers. The \emph{spectral minimization problem} for \eqref{eq:2bis} with respect to the free coefficients $a_1, \dotsc, a_N$ is the minimization problem
\begin{equation*}
\inf_{(a_1, \dotsc, a_N) \in \mathbb{R}^N} \sup\{\Re(s) \mid s \in \mathbb{C} \text{ and }\Delta(s)= 0\}.
\end{equation*}
The value of the above infimum is denoted by $\alpha_{\min}$.
\end{defn}

It is important, in a view to stabilize the system~\eqref{eq:1bis}--\eqref{eq:1bis_control} or to ameliorate the exponential decay of its solutions, to be able to control its spectral abscissa. An effective way to know the spectral abscissa is when there exists a dominant root and a classical control strategy is to minimize this root. 

\subsection{Motivating example: feedback with a single delay}

When one has a single delay in the controller expression \eqref{eq:1bis_control}, i.e., $N=1$, an application of Theorem~2 in \cite{Hayes} gives a complete characterization of the minimal spectral abscissa and the link with the MID property. In particular, it is shown that the minimal spectral abscissa is obtained exactly for a root with the maximal multiplicity two.

\begin{prop}
\label{prop:case_one_delay}
Let $s^* \in \mathbb R$. Then $s^*$ is a spectral value of maximal multiplicity $2$ of the equations~\eqref{eq:1bis}--\eqref{eq:1bis_control} if and only if
\begin{equation}
\label{eq:coeff_one_delay}
s^*=-a_0-\frac{1}{\tau_1} \qquad \text{ and } a_1=-\frac{e^{-1-\tau_1 a_0}}{\tau_1}.
\end{equation}
Furthermore, if the above conditions are satisfied, then $s^*$ is strictly dominant and the minimal spectral abscissa is reached at this spectral value, i.e., $\alpha_{min}=s^*$.
\end{prop}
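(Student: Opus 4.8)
The plan is to split the statement into its algebraic part (the characterization \eqref{eq:coeff_one_delay} of the double root) and its analytic part (strict dominance and optimality of $s^*$). For $N=1$ the characteristic function is $\Delta(s)=s+a_0-a_1 e^{-s\tau_1}$, with $\Delta'(s)=1+a_1\tau_1 e^{-s\tau_1}$ and $\Delta''(s)=-a_1\tau_1^2 e^{-s\tau_1}$. First I would note that, by Corollary~\ref{coro:max-mult}, every root of $\Delta$ has multiplicity at most $N+1=2$, so ``$s^*$ is a spectral value of maximal multiplicity $2$'' means precisely $\Delta(s^*)=\Delta'(s^*)=0$. Solving this system is routine: $\Delta'(s^*)=0$ gives $a_1 e^{-s^*\tau_1}=-1/\tau_1$, and substituting into $\Delta(s^*)=0$ yields $s^*=-a_0-1/\tau_1$, whence $a_1=-e^{-1-\tau_1 a_0}/\tau_1$; conversely these two identities force $\Delta(s^*)=\Delta'(s^*)=0$. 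This proves the claimed equivalence. I would also record that under \eqref{eq:coeff_one_delay} one has $\Delta''(s^*)=-\tau_1^2\,(a_1 e^{-s^*\tau_1})=\tau_1\ne 0$, so the multiplicity is exactly $2$.

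For the dominance my key move is to normalize by the affine change of variable $v:=\tau_1(s-s^*)$. A direct computation using \eqref{eq:coeff_one_delay} gives $\tau_1\Delta(s)=v-1+e^{-v}=:H(v)$, so the roots of $\Delta$ correspond bijectively and with the same real-part ordering (since $\tau_1>0$) to those of the single universal function $H$. It then suffices to show that $v=0$ is the unique root of $H$ with $\Re(v)\ge 0$. On the real axis $H'(v)=1-e^{-v}$ vanishes only at $0$, where $H$ attains a strict global minimum $H(0)=0$, so $0$ is the only real root. For a root $v=x+iy$ with $y\ne 0$ I would rewrite $H(v)=0$ as $e^{-v}=1-v$ and take imaginary parts to obtain $e^{-x}\sin y=y$; if $x\ge 0$ then $e^{-x}\le 1$ forces $|y|=e^{-x}|\sin y|\le|\sin y|<|y|$, a contradiction. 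Hence no root other than $v=0$ lies in $\{\Re(v)\ge 0\}$, i.e.\ $s^*$ is strictly dominant, and in particular the spectral abscissa at the parameters \eqref{eq:coeff_one_delay} equals $s^*$.

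It remains to establish the optimality $\alpha_{\min}=s^*$, for which I would show that every real $a_1$ produces a root with real part $\ge s^*$. Writing the real roots of $\Delta$ as the solutions of $a_1=h(s):=(s+a_0)e^{s\tau_1}$, one checks $h'(s)=e^{s\tau_1}\bigl(1+\tau_1(s+a_0)\bigr)$ vanishes only at $s^*$, where $h$ has a global minimum equal to $a_1^\ast:=-e^{-1-\tau_1 a_0}/\tau_1$. Thus for every $a_1\ge a_1^\ast$ the increasing branch of $h$ on $[s^*,\infty)$ meets the level $a_1$, giving a real root $\ge s^*$, hence a spectral abscissa $\ge s^*$, with equality forcing $a_1=a_1^\ast$. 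The delicate regime, which I expect to be the main obstacle, is $a_1<a_1^\ast$: here $\Delta$ has no real root and one must locate the rightmost complex root. I would normalize to the family $H_\mu(v)=v-1-\mu e^{-v}$ with $\mu=a_1\tau_1 e^{1+a_0\tau_1}$ (so $\mu=-1$ corresponds to \eqref{eq:coeff_one_delay}) and prove that $\Re(v)\ge 0$ for the rightmost root when $\mu<-1$. The cleanest route is the Lambert-$W$ parametrization $v=1+W_k(\mu/e)$ of the roots, together with the standard facts that the principal branch maximizes $\Re W_k$ and that $\min_{t\in\mathbb R}\Re W_0(t)=-1$ is attained only at $t=-1/e$; this gives $\max_k\Re(v)=1+\Re W_0(\mu/e)\ge 0$ for all real $\mu$, with equality only at $\mu=-1$, i.e.\ $\alpha_{\min}=s^*$. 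A local bifurcation analysis at $(\mu,v)=(-1,0)$ confirms the mechanism: the double root splits into a pair with $\Re(v)=\tfrac23(-1-\mu)+o(\mu+1)>0$ for $\mu$ just below $-1$. This is also precisely the step for which the quoted Theorem~2 of \cite{Hayes} can be invoked to bypass the complex-root bookkeeping.
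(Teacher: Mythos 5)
Your proposal is correct, but it takes a genuinely different route from the paper, because the paper in fact offers no proof of Proposition~\ref{prop:case_one_delay} at all: it is quoted as a direct application of Theorem~2 of \cite{Hayes}, and the appendix only proves the two-delay statements (Propositions~\ref{prop1} and~\ref{prop2}). Your first two parts are self-contained and mirror, in miniature, what the paper does for two delays: the characterization \eqref{eq:coeff_one_delay} via $\Delta(s^*)=\Delta'(s^*)=0$ (with Corollary~\ref{coro:max-mult} capping the multiplicity at $2$) is the one-delay analogue of Proposition~\ref{prop1}, and your normalization $v=\tau_1(s-s^*)$, reducing dominance to showing that $H(v)=v-1+e^{-v}$ has no nonzero root with $\Re(v)\ge 0$, is the same device as the paper's Proposition~\ref{proposition0}; your sine inequality $\abs{\sin y}<\abs{y}$ then settles in two lines what, in the two-delay case, costs the paper its entire branch-tracking apparatus. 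The third part—optimality of $s^*$ over all gains $a_1$, which is the content that genuinely exceeds the MID property—is where the substance differs: your split into $a_1\ge a_1^*$ (elementary, via the real function $h(s)=(s+a_0)e^{s\tau_1}$) and $a_1<a_1^*$ is sound, and the Lambert-$W$ parametrization $v=1+W_k(\mu/e)$ correctly captures all roots; but the two facts you lean on there (that $\Re W_0\ge \Re W_k$ for real arguments, and that $\min_{t\in\mathbb R}\Re W_0(t)=-1$ is attained only at $t=-1/e$) are established results in the literature rather than things you prove, so at that single step your argument is citation-dependent in the same way the paper's is, just with a different citation. What each approach buys: the paper's appeal to \cite{Hayes} is the shortest path; your version makes the equivalence and the strict dominance genuinely elementary, exposes the mechanism (your local expansion $\Re(v)\approx\tfrac{2}{3}(-1-\mu)$ showing the double root splits into a rightward-moving conjugate pair is a nice check, though only local and not a substitute for the global step), and isolates precisely which external input remains—either the Lambert-$W$ branch ordering or Hayes' theorem.
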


\section{Statement of the main result}
\label{sec:main-res}

The main result of the paper concerns the system~\eqref{eq:1bis} with a feedback law \eqref{eq:1bis_control} with two delays, i.e., $N=2$. More precisely, we consider the delay-differential equation
\begin{equation}
\label{eq:main-syst}
y'(t) + a_0 y(t) - a_1 y(t - \tau_1) - a_2 y(t - \tau_2) = 0,
\end{equation}
whose characteristic quasipolynomial is the function $\Delta: \mathbb C \to \mathbb C$ defined for $s \in \mathbb C$ by
\begin{equation}
\label{eq:main-quasipoly}
\Delta(s) = s + a_0 - a_1 e^{-s \tau_1} - a_2 e^{-s \tau_2}.
\end{equation}
Corollary~\ref{coro:max-mult} implies that a root $s_0$ of $\Delta$ with multiplicity $3$ has maximal multiplicity. Our main result provides a choice of $s_0$, $a_1$, and $a_2$ ensuring that the quasipolynomial $\Delta(\cdot)$ satisfies the MID property, in which case we also have the exact expression of the spectral abscissa. 

\begin{thm}
\label{th:1}
Let $a_0, a_1, a_2, s_0$ be real numbers and $\tau_1, \tau_2$ be positive real numbers with $\tau_1 \neq \tau_2$. The closed-loop system~\eqref{eq:main-syst} admits $s_0$ as a spectral value with maximal multiplicity $3$ if and only if
\begin{equation}
\label{th:1_coef}
s_0 = -a_0-\frac{1}{\tau_1}-\frac{1}{\tau_2}, \quad a_1 = -\frac{\tau_2}{\tau_1 (\tau_2-\tau_1)}e^{s_0 \tau_1}, \quad a_2 = \frac{\tau_1}{\tau_2(\tau_2-\tau_1)}e^{s_0 \tau_2}.
\end{equation}
Furthermore, in that case, $s_0$ is the spectral abscissa of system~\eqref{eq:main-syst} and the MID property holds.
\end{thm}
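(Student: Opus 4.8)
The plan is to treat the two assertions separately: first the algebraic equivalence characterizing maximal multiplicity, then the dominance.

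For the equivalence, I would write out the three conditions $\Delta(s_0)=\Delta'(s_0)=\Delta''(s_0)=0$ defining a root of multiplicity at least $3$. Setting $A:=a_1 e^{-s_0\tau_1}$ and $B:=a_2 e^{-s_0\tau_2}$, these become the system $A+B=s_0+a_0$, $A\tau_1+B\tau_2=-1$, $A\tau_1^2+B\tau_2^2=0$, linear in $(A,B)$ (the last two equations coming from $\Delta'$ and $\Delta''$). Since $\tau_1\neq\tau_2$, the last two equations have the unique solution $A=-\tau_2/(\tau_1(\tau_2-\tau_1))$, $B=\tau_1/(\tau_2(\tau_2-\tau_1))$, which yields the stated expressions for $a_1,a_2$; substituting into the first equation and simplifying $A+B=-(\tau_1+\tau_2)/(\tau_1\tau_2)$ forces $s_0=-a_0-1/\tau_1-1/\tau_2$. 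Conversely, plugging \eqref{th:1_coef} back in verifies the three vanishing conditions. Finally, since the degree of $\Delta$ is $D=N+1=3$, Corollary~\ref{coro:max-mult} guarantees both that no root can have multiplicity exceeding $3$ (so multiplicity $3$ is maximal) and, crucially, that $s_0$ is the unique root of $\Delta$ in the horizontal strip $\{\,\abs{\Im(s)-\Im(s_0)}<2\pi/\tau_2\,\}$ (taking $r_\delta=\max(\tau_1,\tau_2)=\tau_2$, which I may assume to be $\tau_2$ by the symmetry of \eqref{th:1_coef} under exchanging the two delay blocks).

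For the dominance, I would shift the root to the origin via $z:=s-s_0$, $\tilde\Delta(z):=\Delta(s_0+z)$. Using $s_0+a_0=A+B$ gives $\tilde\Delta(z)=z+A(1-e^{-z\tau_1})+B(1-e^{-z\tau_2})$, an entire function with a triple zero at $z=0$, and the goal becomes $\tilde\Delta(z)\neq0$ whenever $\Re(z)\ge0$ and $z\neq0$. The key device is an integral representation: since $\tilde\Delta(0)=\tilde\Delta'(0)=\tilde\Delta''(0)=0$, Taylor's formula with integral remainder (together with $1-e^{-z\tau}=z\int_0^\tau e^{-zt}\,dt$) yields, after a change of variables and using \eqref{th:1_coef},
\[
\tilde\Delta(z) = \frac{\tau_1 \tau_2}{2(\tau_2 - \tau_1)}\, z^3 \int_0^{\tau_2} \chi(v)\, e^{-z v}\, dv ,
\]
where the kernel $\chi$ equals $(1-v/\tau_2)^2-(1-v/\tau_1)^2$ on $[0,\tau_1]$ and $(1-v/\tau_2)^2$ on $[\tau_1,\tau_2]$. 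The decisive structural fact, checked by a short monotonicity computation, is that $\chi\ge0$ on $[0,\tau_2]$ with equality only at the endpoints (where it vanishes to first order at $0$ and second order at $\tau_2$). Thus, for $\tau_1<\tau_2$ the prefactor is positive and dominance reduces to showing that $K(z):=\int_0^{\tau_2}\chi(v)e^{-zv}\,dv$, the Laplace transform of a nonnegative kernel, has no zero in the closed right half-plane. I would split this into three regions. For $\abs{z}$ large, the crude bound $\abs{\tilde\Delta(z)}\ge\abs{z}-2(\abs{A}+\abs{B})$ (valid for $\Re(z)\ge0$, since $\abs{e^{-z\tau_i}}\le1$ there) rules out roots with $\abs{z}>2(\abs{A}+\abs{B})$. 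For $\abs{\Im(z)}<2\pi/\tau_2$, Corollary~\ref{coro:max-mult} already forces $z=0$ to be the only root, over all real parts. This leaves the compact region $\{\Re(z)\ge0,\ 2\pi/\tau_2\le\abs{\Im(z)},\ \abs{z}\le2(\abs{A}+\abs{B})\}$, which is the heart of the matter.

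The main obstacle is precisely this last region. Positivity of $\chi$ alone is insufficient, since the Laplace transform of the indicator of $[0,\tau_2]$ already vanishes on the imaginary axis at frequency $2\pi/\tau_2$; the argument must therefore exploit the specific shape of $\chi$, namely its vanishing at both endpoints and its single-sign-change convexity ($\chi''<0$ on $(0,\tau_1)$ and $\chi''>0$ on $(\tau_1,\tau_2)$). Viewing $K(z)=\int_0^{\tau_2}\chi(v)e^{-v\Re(z)}e^{-iv\Im(z)}\,dv$ as the Fourier transform of the nonnegative density $v\mapsto\chi(v)e^{-v\Re(z)}$ on $[0,\tau_2]$, I expect the endpoint vanishing to push the first zero of this transform strictly beyond the frequency $2\pi/\tau_2$ already excluded by the Corollary, precluding cancellation in the intermediate region. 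Making this quantitative control of the complex zeros of $K$ rigorous is the technically demanding step I would isolate as a proposition (deferred to the appendix), whereas the multiplicity equivalence and the reduction to $K$ are routine.
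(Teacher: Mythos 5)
Your treatment of the multiplicity characterization is correct and is, in substance, the same computation as the paper's Proposition~\ref{prop1}: the paper first normalizes $\Delta$ via $s \mapsto s_0 + s/\tau_2$, but the resulting $3\times 3$ linear system is identical to yours in the unknowns $A$, $B$, $s_0+a_0$, and your appeal to Corollary~\ref{coro:max-mult} to convert ``multiplicity at least $3$'' into ``maximal multiplicity'' is exactly what the paper does. Your integral representation is also correct: one can verify directly that, with $A=-\tau_2/(\tau_1(\tau_2-\tau_1))$ and $B=\tau_1/(\tau_2(\tau_2-\tau_1))$,
\[
z+A\left(1-e^{-z\tau_1}\right)+B\left(1-e^{-z\tau_2}\right)=\frac{\tau_1\tau_2}{2(\tau_2-\tau_1)}\,z^3\int_0^{\tau_2}\chi(v)\,e^{-zv}\,dv,
\]
with $\chi\ge 0$ vanishing only at the endpoints; moreover your large-$\abs{z}$ bound and your use of Corollary~\ref{coro:max-mult} to clear the strip $\abs{\Im z}<2\pi/\tau_2$ are both valid. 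Had the remaining step been carried out, this would have been a genuinely different (and arguably more self-contained) proof than the paper's.

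However, that remaining step --- nonvanishing of $K(z)=\int_0^{\tau_2}\chi(v)e^{-zv}\,dv$ on the compact set $\{\Re z\ge 0,\ \abs{\Im z}\ge 2\pi/\tau_2,\ \abs{z}\le 2(\abs{A}+\abs{B})\}$ --- is precisely the content of Proposition~\ref{prop2}, i.e., the theorem itself, and you do not prove it: you state an expectation and defer it to an unproven proposition. The heuristic you offer is, moreover, not a viable outline. First, ``pushing the first zero of the transform beyond $2\pi/\tau_2$'' is not what is required: you must exclude zeros of $\omega\mapsto\int_0^{\tau_2}\chi(v)e^{-xv}e^{-i\omega v}\,dv$ at \emph{every} frequency $\abs{\omega}\ge 2\pi/\tau_2$, for every $x\in[0,2(\abs{A}+\abs{B})]$, not merely up to the first zero. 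Second, the qualitative features you invoke cannot be the operative mechanism: the triangular kernel on $[0,\tau_2]$ is also nonnegative and vanishes at both endpoints, yet its transform is proportional to $\bigl((1-e^{-i\omega\tau_2/2})/(i\omega)\bigr)^2$ and vanishes at the real frequencies $\omega=4\pi k/\tau_2$, which lie exactly in the region you need to control. So any correct argument must exploit quantitative properties specific to $\chi$, and none is sketched. For comparison, the paper closes this gap by a completely different mechanism: it embeds the problem in the one-parameter family $Q(\cdot,\lambda)$, $\lambda=\tau_1/\tau_2\in(0,1)$, shows that analytic branches of right-half-plane roots can only cross the imaginary axis from left to right as $\lambda$ increases (Lemma~\ref{lem:derivative-on-imaginary-axis}), extends any such branch up to $\lambda\to 1$ (Proposition~\ref{prop:extension} and Lemmas~\ref{lem:bdd}--\ref{lem:near_1}), and reaches a contradiction with the known single-delay MID property of $Q(\cdot,1)=s-2+e^{-s}(s+2)$ through a second-order expansion of the branch at $\lambda=1$. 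That this takes a full appendix of machinery is a fair measure of the size of the hole left in your proposal.
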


\begin{rem}
The spectral abscissa $s_0=-a_0-\frac{1}{\tau_1}-\frac{1}{\tau_2}$ obtained in Theorem~\ref{th:1} is smaller than $s^*=-a_0-\frac{1}{\tau_1}$, the minimal spectral abscissa obtained with a feedback containing a single delay (Proposition~\ref{prop:case_one_delay}). It means that the addition of a delay $\tau_2$ in the feedback defined in the equation~\eqref{eq:1bis_control} improves the stability of the system~\eqref{eq:1bis}. More precisely, the only way to increase the stability of the system~\eqref{eq:1bis} with one delayed feedback consists in taking a small delay $\tau_1$, which induces a large value for the gain $a_1$ from \eqref{eq:coeff_one_delay}. Instead, if we take a two delayed feedback, stability is improved without having necessarily to increase the size of the coefficients into the controller. We go on a further discussion on that subject in Section~\ref{sec:4}.
\end{rem}

\begin{rem}
For a fixed $a_0 \in \mathbb{R}$, contrarily to Proposition~\ref{prop:case_one_delay}, Theorem~\ref{th:1} provides no conclusions on whether $s_0=a_0-\frac{1}{\tau_1}-\frac{1}{\tau_2}$ solves the spectral minimization problem for \eqref{eq:1bis}--\eqref{eq:1bis_control} in the case $N = 2$ when $a_1$ and $a_2$ are free. Whether this is true is an interesting open problem. However, Theorem~\ref{th:1} is useful when the coefficients are constrained because it might improve the convergence rate of a control system by adding delays in the control. It is illustrated by an example in Section~\ref{sec:4}.
\end{rem}

\begin{rem}
Condition \eqref{th:1_coef} requires equalities to be satisfied, and a natural and fundamental question from the point of view of applications is whether the spectral abscissa of \eqref{eq:main-syst} remains close to $s_0$ if \eqref{th:1_coef} is satisfied in an approximate sense, i.e., if the spectral abscissa of \eqref{eq:main-syst} is close to $s_0$ if $a_0$, $a_1$, $a_2$, $\tau_1$, and $\tau_2$ are close enough to values satisfying the equalities in \eqref{th:1_coef}. The answer to this question is affirmative, since \eqref{eq:main-syst} is a delay-differential equation of retarded type, for which the spectral abscissa is known to be a continuous function of the parameters of the system (see, e.g., \cite[Theorem~1.15]{michiels2014stability}).
\end{rem}

The proof of Theorem~\ref{th:1} exploits two ideas. The first one is to remark that a root $s_0$ of $\Delta$ with maximal multiplicity $3$ enforces the choice of the root $s_0$ and the coefficients $a_1$ and $a_2$ according to \eqref{th:1_coef}. This is the subject of the next proposition.

\begin{prop}
\label{prop1}
Let $\Delta$ be the quasipolynomial defined in \eqref{eq:main-quasipoly}. A root $s_0 \in \mathbb{R}$ of $\Delta$ has a maximal multiplicity $3$ if and only if \eqref{th:1_coef} is satisfied.
\end{prop}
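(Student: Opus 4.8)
The plan is to rewrite the condition ``$s_0$ is a root of maximal multiplicity $3$'' as a system of three vanishing-derivative equations and then solve that system explicitly. First I would observe that the quasipolynomial $\Delta$ in \eqref{eq:main-quasipoly} has degree $D = 3$ (its three frequencies $0, -\tau_1, -\tau_2$ are pairwise distinct because $\tau_1 \neq \tau_2$, and the associated polynomials have degrees $1, 0, 0$), so Corollary~\ref{coro:max-mult} forbids any root of multiplicity larger than $3$. Hence a root $s_0$ has maximal multiplicity $3$ if and only if its multiplicity is at least $3$, that is, if and only if
\begin{equation*}
\Delta(s_0) = \Delta'(s_0) = \Delta''(s_0) = 0.
\end{equation*}
This reduction removes the need to separately rule out higher multiplicity and turns the proposition into an equivalence between these three equations and \eqref{th:1_coef}.

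Next I would differentiate \eqref{eq:main-quasipoly} twice and evaluate at $s_0$. Writing $u = a_1 e^{-s_0 \tau_1}$ and $v = a_2 e^{-s_0 \tau_2}$, the three equations above become the linear system
\begin{equation*}
u + v = s_0 + a_0, \qquad \tau_1 u + \tau_2 v = -1, \qquad \tau_1^2 u + \tau_2^2 v = 0.
\end{equation*}
The last two equations form a $2 \times 2$ system in $(u, v)$ with a Vandermonde-type coefficient matrix of determinant $\tau_1 \tau_2 (\tau_2 - \tau_1) \neq 0$, using precisely the hypothesis $\tau_1 \neq \tau_2$; it therefore has the unique solution $u = -\tau_2/(\tau_1(\tau_2 - \tau_1))$ and $v = \tau_1/(\tau_2(\tau_2 - \tau_1))$. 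Unwinding the definitions of $u$ and $v$ gives the stated expressions for $a_1$ and $a_2$, and inserting these values of $u$ and $v$ into the first equation yields $s_0 + a_0 = -1/\tau_1 - 1/\tau_2$, the stated expression for $s_0$. This proves the ``only if'' direction, and since $u$ and $v$ are nonzero we also confirm $a_1, a_2 \neq 0$, so that $\Delta$ genuinely has degree $3$ and the appeal to Corollary~\ref{coro:max-mult} is legitimate.

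For the converse I would substitute $s_0$, $a_1$, $a_2$ from \eqref{th:1_coef} directly into $\Delta(s_0)$, $\Delta'(s_0)$, $\Delta''(s_0)$ and verify that each vanishes; by the reduction of the first paragraph this is exactly maximal multiplicity $3$. I do not expect a genuine obstacle: the argument is elementary linear algebra once the multiplicity condition is phrased through derivatives. The only steps requiring attention are the solution of the $2 \times 2$ system and the verification that the hypothesis $\tau_1 \neq \tau_2$ is what guarantees its invertibility; the conceptual content lies entirely in the degree count and the resulting reduction via Corollary~\ref{coro:max-mult}.
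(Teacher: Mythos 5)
Your proof is correct and takes essentially the same route as the paper: both reduce ``maximal multiplicity $3$'' to the vanishing of $\Delta(s_0)$, $\Delta'(s_0)$, $\Delta''(s_0)$ via the P\'olya--Szeg\H{o} bound (Corollary~\ref{coro:max-mult}) and then solve the resulting linear interpolation system, the only difference being that the paper first normalizes through the change of variables $\lambda = \tau_1/\tau_2$ with the root translated to the origin (Proposition~\ref{proposition0}) and inverts a $3\times 3$ system in the normalized coefficients, whereas you solve a $2\times 2$ subsystem in $(u,v)$ directly in the original variables. Your additional observation that the solution forces $a_1, a_2 \neq 0$, so that $\Delta$ genuinely has degree $3$ and the appeal to Corollary~\ref{coro:max-mult} is legitimate, is a point the paper leaves implicit.
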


Proposition~\ref{prop1} is easy to establish because it reduces to a simple quasipolynomial interpolation problem, which itself can be written in terms of a linear algebraic problem. Henceforth we have an expression of the root $s_0$ and the coefficients $a_1$ and $a_2$, and it remains to prove that the root $s_0$ is the spectral abscissa of the system~\eqref{eq:1bis}. The second idea which enters in action is the fact that a root $s_0$ with maximal multiplicity $3$ is strictly dominant. 

\begin{prop}
\label{prop2}
Let $\Delta$ be the quasipolynomial defined in \eqref{eq:main-quasipoly}. If $s_0 \in \mathbb R$ is a root with maximal multiplicity $3$ of $\Delta$, then it is a strictly dominant root.
\end{prop}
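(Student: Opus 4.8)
The plan is to move the candidate root to the origin and then strip off its multiplicity through an integral representation. By Proposition~\ref{prop1}, the hypothesis forces the coefficients to be given by \eqref{th:1_coef}; without loss of generality I assume $\tau_1 < \tau_2$, since the statement is invariant under swapping the two delays together with $a_1$ and $a_2$. Introduce the shifted quasipolynomial $\tilde\Delta(z) := \Delta(z+s_0)$, which by construction has a zero of multiplicity exactly $3$ at $z = 0$; using \eqref{th:1_coef} one finds explicitly
\[
\tilde\Delta(z) = z - \tfrac1{\tau_1} - \tfrac1{\tau_2} + \tfrac{\tau_2}{\tau_1(\tau_2-\tau_1)}e^{-z\tau_1} - \tfrac{\tau_1}{\tau_2(\tau_2-\tau_1)}e^{-z\tau_2}.
\]
Strict dominance of $s_0$ is equivalent to the assertion that $z = 0$ is the only zero of $\tilde\Delta$ in the closed right half-plane $\{\Re(z) \ge 0\}$. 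Applying repeatedly the elementary identity $1 - e^{-z\tau} = z\int_0^{\tau} e^{-zt}\,dt$ to absorb the two exponentials, and exploiting that $\tilde\Delta(0)=\tilde\Delta'(0)=\tilde\Delta''(0)=0$, I would rewrite $\tilde\Delta$ as
\[
\tilde\Delta(z) = z^3 J(z), \qquad J(z) = \int_0^{\tau_2} \Gamma(t)\, e^{-zt}\,dt,
\]
where $\Gamma$ is an explicit, continuous, piecewise-quadratic function on $[0,\tau_2]$ that is nonnegative, vanishes at both endpoints, and is unimodal (concave on $[0,\tau_1]$, convex on $[\tau_1,\tau_2]$). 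Since $\tilde\Delta$ has an isolated zero of order $3$ at the origin, $J(0) = \int_0^{\tau_2}\Gamma > 0$, and the zeros of $\tilde\Delta$ away from the origin coincide with those of $J$.

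The problem thus reduces to showing that $J$ has no zero in $\{\Re(z) \ge 0\}$, and three regions are easy. On the nonnegative real axis the integrand is positive, so $J(x) > 0$ for $x \ge 0$. For large modulus one has $z^2 J(z) = \tilde\Delta(z)/z \to 1$ as $|z| \to \infty$ with $\Re(z) \ge 0$ (the exponential terms are bounded and divided by $z$), so $J(z) \neq 0$ once $|z| \ge R_0$ for an explicit $R_0$. Finally, the degree of $\Delta$ being $D = 3$ and the largest gap between frequencies being $r_\delta = \tau_2$, Corollary~\ref{coro:max-mult} guarantees that $z = 0$ is the unique zero of $\tilde\Delta$ in the horizontal strip $\{\,|\Im(z)| < 2\pi/\tau_2\,\}$; hence $J$ is zero-free there as well.

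What remains, and what I expect to be the main obstacle, is the compact region $\{\Re(z) \ge 0,\ 2\pi/\tau_2 \le |\Im(z)|,\ |z| \le R_0\}$ lying close to the imaginary axis at moderate height. Here nonnegativity of $\Gamma$ is genuinely insufficient: a symmetric triangular kernel is also nonnegative and unimodal, yet its transform (a squared $\mathrm{sinc}$) does vanish on the imaginary axis, so some finer feature of $\Gamma$ must be used. The plan is to exploit the explicit concave--convex, asymmetric shape of $\Gamma$ to preclude the simultaneous vanishing of $\Re(J)$ and $\Im(J)$ in this region---for instance by controlling the number of sign changes of $t \mapsto \Gamma(t)\cos(yt)$ and $t\mapsto \Gamma(t)\sin(yt)$ and showing the two associated integrals cannot vanish together---or, alternatively, by applying the argument principle on the boundary of a large right half-disk (indented at the origin) to verify that the total number of zeros of $\tilde\Delta$ in the closed right half-plane equals $3$, all accounted for by the root at the origin. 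Either route isolates the crux in the behavior of $\tilde\Delta$ on the imaginary axis above the Pólya--Szegő strip, which is precisely where the fine arithmetic of the two delays enters.
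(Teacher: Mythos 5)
Your reductions are all correct as far as they go. Writing $\tilde\Delta(z) = z - \frac{1}{\tau_1} - \frac{1}{\tau_2} + b_1 e^{-z\tau_1} + b_2 e^{-z\tau_2}$ with $b_1 = \frac{\tau_2}{\tau_1(\tau_2-\tau_1)}$, $b_2 = -\frac{\tau_1}{\tau_2(\tau_2-\tau_1)}$, the identity $e^{-z\tau} = 1 - \tau z + \frac{\tau^2 z^2}{2} - z^3\int_0^\tau \frac{(\tau-t)^2}{2}e^{-zt}\,dt$ and the vanishing of the constant, linear, and quadratic coefficients do give $\tilde\Delta(z) = z^3\int_0^{\tau_2}\Gamma(t)e^{-zt}\,dt$, with $\Gamma(t) = \frac{t\left(2\tau_1\tau_2-(\tau_1+\tau_2)t\right)}{2\tau_1\tau_2}$ on $[0,\tau_1]$ and $\Gamma(t) = \frac{\tau_1(\tau_2-t)^2}{2\tau_2(\tau_2-\tau_1)}$ on $[\tau_1,\tau_2]$, which is indeed nonnegative with the shape you describe; the real-axis positivity, the large-modulus estimate, and the use of Corollary~\ref{coro:max-mult} to clear the strip $\abs{\Im(z)} < 2\pi/\tau_2$ are also fine. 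But the proof stops exactly where the theorem lives: nothing is actually proved in the compact region $\{\Re(z)\ge 0,\ \abs{\Im(z)}\ge 2\pi/\tau_2,\ \abs{z}\le R_0\}$. This is a genuine gap, not a routine verification. As you yourself observe, no argument using only nonnegativity and unimodality of $\Gamma$ can close it (the triangular kernel has both properties and its transform vanishes on the imaginary axis), so any completion must isolate and exploit a finer quantitative property of this particular $\Gamma$ --- and neither of your sketched routes does so. The sign-change-counting idea is only named, with no indication of which feature of $\Gamma$ it would hinge on; the argument-principle alternative merely restates the problem, since evaluating the winding of $\tilde\Delta$ along the indented imaginary axis requires precisely the nonvanishing information you are trying to establish.

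For contrast, the paper avoids any such pointwise analysis at fixed delays. It normalizes to $Q(\cdot,\lambda)$ with $\lambda = \tau_1/\tau_2 \in (0,1)$ and runs a continuation argument in $\lambda$: a nontrivial root with $\Re \geq 0$ is necessarily simple and generates an analytic branch (Lemma~\ref{lem:simple-root}); such a branch can cross the imaginary axis only from left to right (Lemma~\ref{lem:derivative-on-imaginary-axis}), hence stays in the closed right half-plane and extends, bounded, up to $\lambda \to 1$ (Proposition~\ref{prop:extension}, Lemma~\ref{lem:bdd}); it must then converge to a nonzero purely imaginary root $2\zeta i$ of the limiting quasipolynomial $s - 2 + e^{-s}(s+2)$, whose MID property is known from the single-delay case; finally, a second-order expansion at $\lambda = 1$ yields $\Re(s''(1)) = -\zeta^2/3 < 0$, contradicting confinement to $\Re \geq 0$. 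In other words, the paper converts the hard compact-region question into a perturbation computation around the solvable single-delay limit. If you want to pursue the kernel representation, be aware that filling your remaining gap is, in effect, the entire content of the proposition.
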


For $s_0 \in \mathbb{R}$, notice first that Proposition~\ref{prop1} determines the choice of $s_0$ and the coefficients of the system~\eqref{eq:1bis} in the condition~\eqref{th:1_coef}, while Proposition~\ref{prop2} implies that $s_0$ is a strictly dominant root of $\Delta$. We deduce that $s_0$ is the spectral abscissa of the system~\eqref{eq:1bis} and we obtain Theorem~\ref{th:1}. The nontrivial point of the proof of Theorem~\ref{th:1} is Proposition~\ref{prop2}. Let us say a brief word about the strategy to reach the conclusion of Proposition~\ref{prop2}. The first step is to prove that $s_0$ is a strictly dominant root of $\Delta$ when $\tau_2$ is near $\tau_1$, which is shown by considering the limit $\tau_1 \to \tau_2$. We then prove that, if strict dominance is lost for some $\tau_1$ and $\tau_2$, we obtain a contradiction with the first step. The full proofs of Proposition~\ref{prop1} and Proposition~\ref{prop2} are given in Appendix~\ref{appen:0}.

\section{Stabilization and numerical simulations}
\label{sec:example}

One of the applications of Theorem~\ref{th:1} is the stabilization of first-order delay-differential equations by means of a delayed feedback. To illustrate this idea, we consider a nonlinear time-delay system of retarded type with two delays appearing in the biological modeling of the evolution of the number of platelets in the blood (see \cite{Belair}). This model will be used as a toy model. It does not aim for an application and an interpretation in biology but it serves the purpose of giving an illustration of the method.

In \cite{Belair}, the authors propose the nonlinear delay-dif\-fe\-ren\-tial equation with two delays
\begin{equation}
\label{eq:platelets1}
y'(t) = -\gamma y(t)+g(y(t - \tau_1)) -g(y(t - \tau_2))e^{-\gamma T}
\end{equation}
to model the evolution of the number of platelets in the blood: The variable $y$ is the population of platelets, $\tau_1 > 0$ represents the maturation age, $\tau_2 > \tau_1$ the age of death, and $T = \tau_2 - \tau_1$ is the lifespan of a mature platelet. The positive function $g$ is defined by $g(t)=g_0 \frac{\theta^n t}{\theta^n+t^n}$, where $n$, $g_0$, and $\theta$ are positive real numbers, and the parameter $\gamma>0$ is an age-independent rate of destruction for platelets. Note that the origin is an equilibrium point of \eqref{eq:platelets1} which, according to \cite{Belair}, is known to be unstable when $g_0 > \frac{\gamma}{1-e^{-\gamma T}}$. Under this latter assumption, the system also admits another constant equilibrium point, given by $y_{\text{eq}} = \theta \left(g_0 \frac{1 - e^{-\gamma T}}{\gamma} - 1\right)^{1/n}$.

Let us now assume that one disposes of a control $u(t)$ in \eqref{eq:platelets1}, i.e., that we have the delay-differential equation
\begin{equation}
\label{eq:platelets-control}
y'(t) = -\gamma y(t)+g(y(t - \tau_1)) -g(y(t - \tau_2))e^{-\gamma T} + u(t).
\end{equation}
Given a target concentration of platelets $y_\ast > 0$, we wish to choose the control $u(t)$ in order to render $y_\ast$ a locally asymptotically stable equilibrium point of \eqref{eq:platelets-control}. For that purpose, we choose $u(t)$ under the feedback form
\begin{equation}
\label{eq:platelets-feedback}
u(t) = u_0 + \alpha_1 y(t - \tau_1) + \alpha_2 y(t - \tau_2),
\end{equation}
where $u_0$, $\alpha_1$, and $\alpha_2$ are real constants to be designed. Note that, in order for $y_\ast$ to be an equilibrium point of \eqref{eq:platelets-control}--\eqref{eq:platelets-feedback}, one must have
\begin{equation}
\label{eq:platelets-u0}
u_0 = (\gamma - \alpha_1 - \alpha_2) y_\ast - (1 - e^{-\gamma T}) g(y_\ast).
\end{equation}

Thanks to standard results on linearization of time-delay systems (see, e.g., \cite[Chapter~4]{Halanay1966Differential}), the equilibrium point $y_\ast$ of \eqref{eq:platelets-control}--\eqref{eq:platelets-u0} is locally asymptotically stable if the origin of the linearized system
\begin{equation}
\label{eq:platelets-linearized}
w'(t) = - \gamma w(t) + \left(\alpha_1 + g'(y_\ast)\right) w(t - \tau_1) + \left(\alpha_2 - g'(y_\ast) e^{-\gamma T}\right) w(t - \tau_2)
\end{equation}
is asymptotically stable. Applying Theorem~\ref{th:1}, we deduce that, by choosing
\[
s_0 = -\gamma - \frac{1}{\tau_1} - \frac{1}{\tau_2}, \quad \alpha_1 = - g'(y_\ast) -\frac{\tau_2}{\tau_1 (\tau_2-\tau_1)}e^{s_0 \tau_1}, \quad \alpha_2 = g'(y_\ast) e^{-\gamma T} + \frac{\tau_1}{\tau_2(\tau_2-\tau_1)}e^{s_0 \tau_2},
\]
the origin of the linearized system \eqref{eq:platelets-linearized} is exponentially stable, and hence the equilibrium $y_\ast$ of \eqref{eq:platelets-control}--\eqref{eq:platelets-u0} is locally asymptotically stable.

\begin{rem}
As already said at the beginning of Section~\ref{sec:example},  the biological model served the only purpose to illustrate numerically the stabilization method induced by our main result. In fact, it might be difficult to imagine that a control $u(t)$ defined in Equation~\eqref{eq:platelets-feedback} can be physically constructed because it requires to control precisely the rate of maturation and death of the platelets near the equilibrium point. In view to overcome this problem, a new control method should be investigated for this specific biological system and it is far beyond the scope of this paper.
\end{rem}

We now give numerical results with parameters
\begin{equation*}
n=2.2,\; \theta=0.04,\; \gamma=3,\; T_1=9,\; T_2=10,\; g_0=4,
\end{equation*}
which are those from \cite{Belair}. In this case, the nonzero equilibrium is $y_{\text{eq}} \approx 0.02428$, and we choose to stabilize the system around the value $y_\ast = 0.01$, in which case the numerical values of $s_0$, $\alpha_1$, and $\alpha_2$ are
\begin{equation*}
s_0 \approx -3.164,\; \alpha_1 \approx -3.439,\; \alpha_2 \approx 3.218 \cdot 10^{-13}.
\end{equation*}

Figure~\ref{fig:1}(a) illustrates\footnote{Computations of the spectra were performed with Python's \texttt{cxroots} package \cite{cxroots}, and the search of spectral values was limited to the rectangle $\{s \in \mathbb C \mid -10 \leq \Re(s) \leq 20,\, \abs{\Im(s)} \leq 5\}$.} the spectrum of the linearized system \eqref{eq:platelets-linearized} (blue circles) as well as that of the corresponding linear system with no feedback control, i.e., \eqref{eq:platelets-linearized} with $\alpha_1 = \alpha_2 = 0$ (orange triangles). We observe that, without feedback control, the linearized system is unstable, and the proposed feedback control efficiently stabilizes the system, with $s_0 \approx -3.164$ as its spectral abscissa. Figure~\ref{fig:1}(b) provides the simulation of trajectories of the linearized system \eqref{eq:platelets-linearized} (dashed orange line) and of the original nonlinear system \eqref{eq:platelets1} with the proposed feedback law \eqref{eq:platelets-feedback} (continuous blue line) for a constant initial condition equal to $\frac{1}{2}y_\ast = 0.005$.

\begin{figure}[ht]
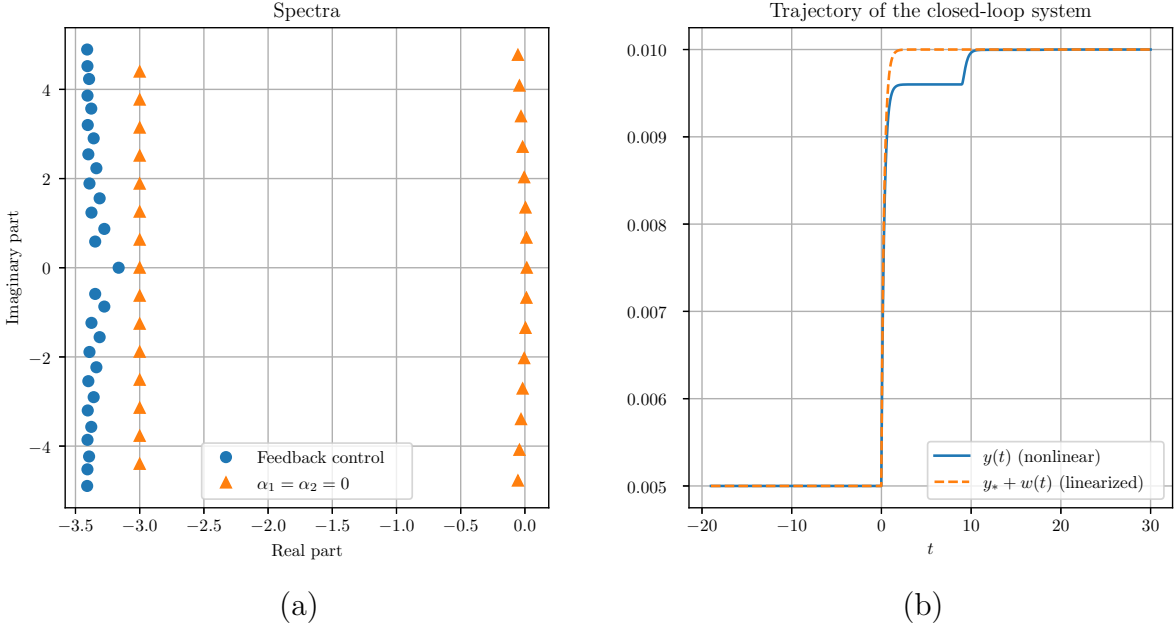

\centering
\begin{tabular}{@{}c@{}c@{}}
\resizebox{0.5\textwidth}{!}{\input{Figures/Example_equilibrium_spectra.pgf}} & \resizebox{0.5\textwidth}{!}{\input{Figures/Example_equilibrium_trajectories.pgf}} \tabularnewline
(a) & (b) \tabularnewline
\end{tabular}
\caption{(a) Spectrum of the linearized system, with (blue circles) and without (orange triangles) the designed feedback control. (b) Trajectories of \eqref{eq:platelets-linearized} (continuous blue line) and of \eqref{eq:platelets1}--\eqref{eq:platelets-u0} (dashed orange line).}
\label{fig:1}
\end{figure}

\section{Constrained stabilization of a simple integrator}
\label{sec:4}

In order to illustrate the interest of Theorem~\ref{th:1}, let us consider the control system defined by the simple integrator, i.e., the control system
\begin{equation}
\label{eq:syst-integrator}
y^\prime(t) = u(t),
\end{equation}
where $y(t) \in \mathbb R$ is the state and $u(t) \in \mathbb R$ is the control. In this section, we consider the stabilization of \eqref{eq:syst-integrator} by linear feedback laws, with or without delays, with constraints in the corresponding controller's gains.

In case of no delays, assume that 
\begin{equation}
\label{eq:feedback-0}
u(t) = a y(t)
\end{equation}
with the constraint that $\abs{a} \leq 1$. In this case, the 
characteristic function is $\Delta(\cdot)$ of 
closed-loop system $y^\prime(t) - a y(t) = 0$ is given by  $\Delta(s) = s - a$, with a single root at $s = a$, also equal to the spectral abscissa of that closed-loop system. In particular, the best achievable exponential decay rate for \eqref{eq:syst-integrator} (i.e., the minimal spectral abscissa for \eqref{eq:syst-integrator}) with the feedback law \eqref{eq:feedback-0} and with the constraint $\abs{a} \leq 1$ is equal to $\gamma = -1$, attained for $a = -1$.

Let us now consider, instead of \eqref{eq:feedback-0}, the linear feedback law with a single delay given by
\begin{equation}
\label{eq:feedback-1}
u(t) = a y(t - \tau),
\end{equation}
where $\tau > 0$ is the delay, and still with the constraint $\abs{a} \leq 1$. In this case, the characteristic function $\Delta(\cdot)$ of the closed-loop system $y^\prime(t) - a y(t - \tau) = 0$ is defined as $\Delta(s) = s - a e^{- s \tau}$. Using the results from \cite{Hayes} (see Proposition~\ref{prop:case_one_delay} above), one can show that the minimal spectral abscissa for \eqref{eq:syst-integrator} with the feedback law \eqref{eq:feedback-1} and with the constraints $\abs{a} \leq 1$ and $\tau > 0$ is equal to $\gamma = -e \approx -2.718$, attained for $a = -1$ and $\tau = e^{-1} \approx 0.3679$.

Thanks to Theorem~\ref{th:1}, we can actually design feedback control laws with two delays and constraints on the gains yielding an even smaller spectral abscissa for the corresponding closed-loop system. Indeed, consider the linear feedback law with two delays given by
\begin{equation}
\label{eq:feedback-2}
u(t) = a_1 y(t - \tau_1) + a_2 y(t - \tau_2),
\end{equation}
where $\tau_1$ and $\tau_2$ are positive delays with $\tau_2 > \tau_1$, and with the constraint $\abs{a_1} + \abs{a_2} \leq 1$. The closed-loop system is $y^\prime(t) - a_1 y(t - \tau_1) - a_2 y(t - \tau_2) = 0$ and the corresponding characteristic function is $\Delta(s) = s - a_1 e^{- s \tau_1} - a_2 e^{- s \tau_2}$. Let us select $a_1$ and $a_2$ as in Theorem~\ref{th:1}, i.e., given by \eqref{th:1_coef}. The constraint that $\abs{a_1} + \abs{a_2} \leq 1$ then reads
\[
\frac{\tau_2}{\tau_1 (\tau_2-\tau_1)}e^{s_0 \tau_1} + \frac{\tau_1}{\tau_2(\tau_2-\tau_1)}e^{s_0 \tau_2} \leq 1,
\]
and, by Theorem~\ref{th:1}, the spectral abscissa of the closed-loop system is $\gamma = -\frac{1}{\tau_1} - \frac{1}{\tau_2}$. A numerical constrained minimization algorithm\footnote{Computations were carried out using the function \texttt{minimize} from Python's \texttt{scipy.optimize} module \cite{SciPy}, using the method of trust region with constraints \texttt{trust-const} from \cite{Conn2000Trust}.} shows that the smallest spectral abscissa that can be achieved with the feedback law \eqref{eq:feedback-2}, the constraints $\abs{a_1} + \abs{a_2} \leq 1$ and $\tau_2 > \tau_1 > 0$, and choosing $a_1$ and $a_2$ as in Theorem~\ref{th:1} is $\gamma \approx -3.353$, attained for $a_1 \approx -0.9882$, $a_2 \approx 0.01176$, $\tau_1 \approx 0.4063$, and $\tau_2 \approx 1.122$. In particular, this spectral abscissa is smaller than the smallest spectral abscissa $-e$ that can be achieved with the single-delay feedback law \eqref{eq:feedback-1} with the constraint $\abs{a} \leq 1$.

\begin{figure}[ht]
\centering
\resizebox{0.5\textwidth}{!}{\input{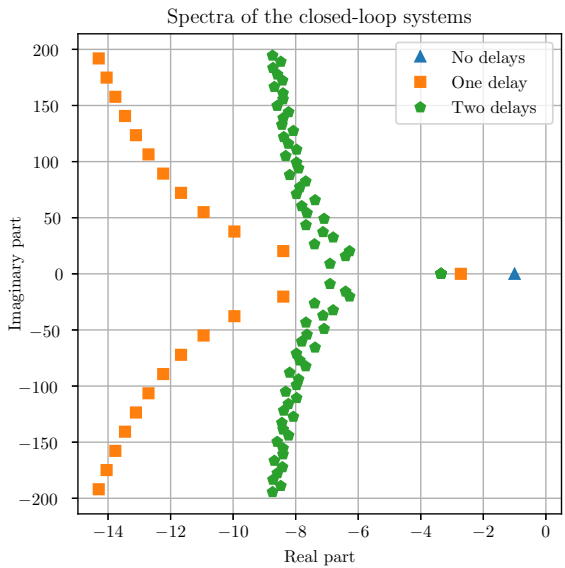}}
\caption{Spectra of \eqref{eq:syst-integrator} with the feedback law \eqref{eq:feedback-0} with no delays (blue triangle), the feedback law \eqref{eq:feedback-1} with one delay (orange squares), and the feedback law \eqref{eq:feedback-2} with two delays (green pentagons), when the parameters of these feedback laws are chosen as described in the text.}
\label{fig:spectra}
\end{figure}

Figure~\ref{fig:spectra} shows the spectra\footnote{Spectrum computations have been performed using Python's \texttt{cxroots} module \cite{cxroots}. The search for roots was restricted to the rectangle $\{z \in \mathbb C \mid -30 \leq \Re(z) \leq 2,\, \abs{\Im(z)} \leq 200\}$.} of \eqref{eq:syst-integrator} with the feedback laws \eqref{eq:feedback-0}, \eqref{eq:feedback-1}, and \eqref{eq:feedback-2} with the above choices of parameters. We remark that the delayed feedback law \eqref{eq:feedback-1} with a single delay allows one to improve the spectral abscissa with respect to the delay-free proportional feedback law \eqref{eq:feedback-0}, and that the spectral abscissa can be further improved with the feedback law with two delays \eqref{eq:feedback-2} by exploiting the result of Theorem~\ref{th:1}.

\begin{rem}
For the feedback laws \eqref{eq:feedback-0} and \eqref{eq:feedback-1}, the above choices of parameters can be shown to be those that minimize the spectral abscissa under the provided constraints on the coefficients of the feedback laws, either by simple arguments in the case of \eqref{eq:feedback-0} or by Proposition~\ref{prop:case_one_delay} in the case of \eqref{eq:feedback-1}. However, the above choice of parameters for the feedback law \eqref{eq:feedback-2} was selected under the additional constraint that these parameters satisfy \eqref{th:1_coef}, by performing a numerical minimization of the spectral abscissa under this constraint. Whether this coincides with the minimization of the spectral abscissa of \eqref{eq:syst-integrator} over all feedback laws \eqref{eq:feedback-2} satisfying $\abs{a_1} + \abs{a_2} \leq 1$ is an open question (see also Conjecture~\ref{conj} below). Note, however, that the particular choice of parameters for \eqref{eq:feedback-2} above already performs better than any possible choice of parameters for \eqref{eq:feedback-0} and \eqref{eq:feedback-1} satisfying the corresponding constraints.
\end{rem}

\section{Conclusion and perspectives}
\label{sec:conclusion}

We addressed a question on the MID property when the system~\eqref{eq:1bis} has a two delayed feedback, i.e., $N=2$, and we illustrated the usefulness of this property from a control-theoretical viewpoint. We do hope that this kind of consideration and the MID property will hold as well when we have $N>2$ delays. It would allow to use multiple delays to increase the stability of the system~\eqref{eq:coeff_one_delay} without an explosion in the coefficients of the delayed feedback. However, it seems that an analog of Theorem~\ref{th:1} in the case $N>2$ would not hold without bounds on the delays as illustrated in Figure~\ref{fig:4}, which represents the spectrum of the quasipolynomial $\Delta(\cdot)$ defined in \eqref{eq:2bis} with $N=3$, $\tau_1 = 0.917686$, $\tau_2 = 1$, $\tau_3 = 1.067836$, and $a_0$, $a_1$, $a_2$ and $a_3$ chosen in order to guarantee that $s_0=0$ is a root with maximal multiplicity four\footnote{The spectrum has been computed with Python's \texttt{cxroots} package \cite{cxroots} in the rectangle $\{s \in \mathbb C \mid \abs{\Re(s)} \leq 10,\, \abs{\Im(s)} \leq 50\}$.}. Figure~\ref{fig:4} shows that the root of maximal multiplicity $0$ is not strictly dominant.

\begin{figure}[htp]
\centering
\resizebox{0.5\textwidth}{!}{\input{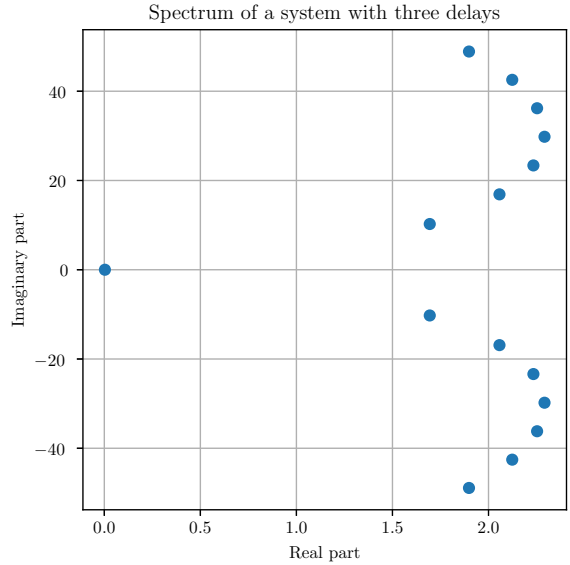}}
\caption{Spectrum of a quasipolynomial $\Delta(\cdot)$ from \eqref{eq:2bis} with $N=3$ and a root of maximal multiplicity four at $s_0 = 0$.}\label{fig:4}
\end{figure}

We saw that with one single delayed feedback the MID property allows to reach the minimal spectral abscissa and we surmise that it is also the case when we have a two-delayed feedback. More precisely, we propose the following conjecture.
\begin{conj}
\label{conj}
For arbitrary $a_1,\,a_2\in\mathbb{R}$, the minimal spectral abscissa for the system~\eqref{eq:1bis} with two delays is:
\begin{equation*}
\alpha_{\min}=a_0-\frac{1}{\tau_1}-\frac{1}{\tau_2}.
\end{equation*}
\end{conj}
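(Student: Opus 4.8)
The plan is to prove the two inequalities separately. The easy direction is immediate from Theorem~\ref{th:1}: the MID choice of $(a_1,a_2)$ in \eqref{th:1_coef} produces a strictly dominant root $s_0$, so the spectral abscissa there equals $s_0 = -a_0 - \tfrac{1}{\tau_1} - \tfrac{1}{\tau_2}$, whence $\alpha_{\min}\le s_0$ (matching the value furnished by Theorem~\ref{th:1}). Everything therefore reduces to the lower bound $a(a_1,a_2)\ge s_0$ for \emph{every} $(a_1,a_2)\in\mathbb{R}^2$, where $a(a_1,a_2)$ denotes the spectral abscissa of \eqref{eq:main-quasipoly}.

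The first step I would take is a change of variables turning this lower bound into a \emph{non-stabilizability} statement. Writing $z = s - s_0$ and $A_i = a_i e^{-s_0\tau_i}$ (so that $A_1,A_2$ again range over all of $\mathbb{R}$), and using $s_0 + a_0 = -\tfrac{1}{\tau_1}-\tfrac{1}{\tau_2}$, the quasipolynomial becomes
\[
g(z) = z - \Bigl(\tfrac{1}{\tau_1}+\tfrac{1}{\tau_2}\Bigr) - A_1 e^{-z\tau_1} - A_2 e^{-z\tau_2}.
\]
A short computation shows that the MID values of $(A_1,A_2)$ are exactly those for which $z=0$ is a (triple) root of $g$. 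Since \eqref{eq:main-quasipoly} is of retarded type its abscissa is attained, so $a\ge s_0$ is equivalent to the assertion that \emph{$g$ has a zero in the closed right half-plane $\{\Re z\ge 0\}$ for every $(A_1,A_2)\in\mathbb{R}^2$}; equivalently, the two-delay family $g$ is never exponentially stable.

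I would attack this by a homotopy and root-counting argument. At $(A_1,A_2)=(0,0)$ one has $g(z)=z-(\tfrac{1}{\tau_1}+\tfrac{1}{\tau_2})$, with a single root at $z=\tfrac{1}{\tau_1}+\tfrac{1}{\tau_2}>0$, so the number $N_u(A_1,A_2)$ of roots with $\Re z>0$ equals $1$ there. As $g$ is of retarded type, $N_u$ is finite and locally constant off the \emph{crossing set} $\Gamma=\{(A_1,A_2):g(i\omega)=0 \text{ for some }\omega\in\mathbb{R}\}$, which for each frequency $\omega$ is cut out by the two real equations $\Re g(i\omega)=0$ and $\Im g(i\omega)=0$. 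The plan is to describe $\Gamma$ and the sign of each crossing (the direction in which roots traverse the axis, read off from $\Re\bigl(e^{-i\omega\tau_i}/g'(i\omega)\bigr)$) completely enough to prove $N_u\ge 1$ on the whole plane, with equality to the boundary only at the MID point where a root sits on the axis. As an independent cross-check I would run the variational argument: the sensitivities $\partial s/\partial a_i = e^{-s\tau_i}/\Delta'(s)$ show that a minimizer cannot have a single simple real rightmost root (both partials are real of the same sign, giving a descent direction); existence of a minimizer is expected from a coercivity estimate (large gains force a large positive real root, so $a\to+\infty$); and the maximal-multiplicity heuristic — two free real parameters, maximal admissible multiplicity $3$ by Corollary~\ref{coro:max-mult} — singles out the triple real root of Proposition~\ref{prop2} as the only rigid candidate.

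The hard part, and the reason the statement is only conjectured, lies in closing the root count globally. In the homotopy approach one must rule out \emph{every} stabilizing region, i.e.\ show that no combination of axis crossings ever drives $N_u$ to $0$; this demands a full global analysis of the two-delay crossing curves and their crossing directions, which is precisely the notoriously delicate core of two-delay stability theory. In the variational approach the analogous obstacle is to exclude a stationary configuration carrying two complex-conjugate pairs (or a real root together with a conjugate pair) simultaneously on the rightmost vertical line, whose real parts are locked and cannot be pushed left by only two parameters. That such non-dominant maximal-multiplicity phenomena genuinely occur once a third delay is added is exactly the content of Figure~\ref{fig:4}, which warns that the optimality of the triple root is a fragile, delay-number-specific fact rather than a soft corollary of Theorem~\ref{th:1}; a complete proof must therefore exploit the special two-delay structure in an essential way.
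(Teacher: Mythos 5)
You were asked to prove Conjecture~\ref{conj}, and the first thing to say is that the paper itself offers no proof of it: the statement is explicitly posed as open. The remark following Theorem~\ref{th:1} says that whether $s_0$ solves the spectral minimization problem for \eqref{eq:1bis}--\eqref{eq:1bis_control} with $N=2$ when $a_1$ and $a_2$ are free ``is an interesting open problem,'' and the remark at the end of Section~\ref{sec:4} repeats this even for the constrained version. So there is no paper argument to compare yours against; the only question is whether your attempt closes the conjecture, and by your own account it does not.

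What you establish is the easy half: Theorem~\ref{th:1} gives $\alpha_{\min}\le s_0=-a_0-\tfrac{1}{\tau_1}-\tfrac{1}{\tau_2}$ (and you are right to read the conjecture's displayed value $a_0-\tfrac{1}{\tau_1}-\tfrac{1}{\tau_2}$ as a sign typo, consistent with \eqref{th:1_coef} and with Proposition~\ref{prop:case_one_delay}). Your reduction of the lower bound to a non-stabilizability statement for $g(z)=z-\bigl(\tfrac{1}{\tau_1}+\tfrac{1}{\tau_2}\bigr)-A_1e^{-z\tau_1}-A_2e^{-z\tau_2}$ over all $(A_1,A_2)\in\mathbb{R}^2$ is a sound and clean reformulation. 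But everything after that is a research plan, not a proof. The homotopy/root-counting route requires a complete global description of the two-delay crossing set and of the crossing directions over the entire $(A_1,A_2)$-plane, which is exactly the delicate core of two-delay stability theory that the introduction cites (\cite{gnc:05jmaa}, \cite{hale-huang:93}) as resisting extension to multiple or degenerate roots; nothing in your sketch, nor in the paper's Appendix~\ref{appen:0} machinery (which analyzes only the one-parameter family $Q(\cdot,\lambda)$ obtained \emph{after} imposing the MID coefficients), supplies that analysis. The variational cross-check has the same status: it presupposes existence of a minimizer via an unproven coercivity claim, and it cannot exclude a stationary configuration with several rightmost roots whose real parts are locked against two-parameter perturbations --- the very phenomenon Figure~\ref{fig:4} shows does occur with three delays. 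So the proposal contains one genuine, openly acknowledged gap: the entire lower bound $a(a_1,a_2)\ge s_0$, which is the whole content of the conjecture.
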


\bibliographystyle{abbrv} 
\bibliography{ifacconf}

\begin{appendix}
\section{Proofs}
\label{appen:0}

The main result of the paper, Theorem~\ref{th:1}, is an immediate consequence of Propositions~\ref{prop1} and \ref{prop2}. Their proofs are given in Sections~\ref{appen:1} and \ref{appen:proof_prop2}, respectively. Without loss of generality, we assume in the sequel of the section that $\tau_2 > \tau_1$.

\subsection{Proof of Proposition~\ref{prop1}}
\label{appen:1}

In order to simplify the proof of Proposition~\ref{prop1}, we first consider a normalization of the quasipolynomial $\Delta$ from \eqref{eq:main-quasipoly}. The next proposition, whose proof is immediate, provides the explicit normalization considered here, which consists of a linear change of variable in the complex plane chosen in such a way as to translate the multiple root to the origin and normalize the largest delay to $1$.

\begin{prop}
\label{proposition0}
Let $\tau_1$ and $\tau_2$ be such that $0 < \tau_1 < \tau_2$, $\lambda = \tau_1/\tau_2 \in (0, 1)$, and $s_0 \in \mathbb{R}$. Let $\Delta$ be the quasipolynomial from \eqref{eq:main-quasipoly}. Define the quasipolynomial $Q(\cdot, \lambda): \mathbb C \to \mathbb C$ by $Q(s, \lambda) = \tau_2 \Delta(s_0 + \frac{s}{\tau_2})$ for $s\in \mathbb{C}$, where. Then
\begin{equation}
\label{eq:-1.2}
  Q(s,\lambda) = s+\tilde{a}_0+\tilde{a}_1 e^{-\lambda s}+\tilde{a}_2 e^{- s}, 
\end{equation}
where
\begin{equation}
\label{eq:-1.3}
\tilde{a}_0 = (s_0+a_0) \tau_2, \quad \tilde{a}_1 = -a_1 \tau_2e^{-s_0 \tau_1}, \quad \tilde{a}_2 = -a_2 \tau_2e^{-s_0 \tau_2}.
\end{equation}
Moreover, the bijection $s\mapsto \tau_2(s-s_0)$ maps the roots of $\Delta(\cdot)$ to the roots of $Q(\cdot,\lambda)$ preserving their multiplicities and the order of their real parts. In particular, a dominant root (resp., strictly dominant root) of $\Delta$ is mapped to a dominant root (resp., strictly dominant root) of $Q(\cdot, \lambda)$.
\end{prop}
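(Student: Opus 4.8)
The plan is to verify the two assertions of the proposition directly, as both reduce to elementary manipulations. The first assertion is obtained by substituting the prescribed affine change of variable into $\Delta$, and the second by observing that this change of variable is a conformal bijection of $\mathbb{C}$ whose real part is an increasing affine function of $\Re(s)$. There is no genuine difficulty to overcome; the point is simply to organize the bookkeeping cleanly.

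First I would compute $Q(s,\lambda) = \tau_2 \Delta(s_0 + s/\tau_2)$ explicitly. Evaluating $\Delta$ from \eqref{eq:main-quasipoly} at the point $s_0 + s/\tau_2$ and multiplying by $\tau_2$ gives
\[
\tau_2 \Delta\!\left(s_0 + \tfrac{s}{\tau_2}\right) = s + (s_0 + a_0)\tau_2 - a_1 \tau_2\, e^{-(s_0 + s/\tau_2)\tau_1} - a_2 \tau_2\, e^{-(s_0 + s/\tau_2)\tau_2}.
\]
Factoring $e^{-s_0 \tau_j}$ out of each exponential and using $\tau_1/\tau_2 = \lambda$ and $\tau_2/\tau_2 = 1$ yields exactly \eqref{eq:-1.2} with the coefficients $\tilde a_0,\tilde a_1,\tilde a_2$ recorded in \eqref{eq:-1.3}. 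This step is a pure computation with no subtlety.

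For the second assertion I would set $\phi(s) = \tau_2(s - s_0)$ and record the identity $Q(\phi(s),\lambda) = \tau_2 \Delta(s)$, which is immediate from the definition of $Q$ since $s_0 + \phi(s)/\tau_2 = s$. As $\tau_2 \neq 0$, this shows $\Delta(s) = 0 \iff Q(\phi(s),\lambda) = 0$, so the affine bijection $\phi$ carries the root set of $\Delta$ onto that of $Q(\cdot,\lambda)$. Multiplicities are preserved because $\phi$ is a nonconstant affine (hence biholomorphic) change of variable and $Q(\cdot,\lambda)$ is $\tau_2$ times the pullback of $\Delta$: if $\Delta$ vanishes to order $m$ at $s_\ast$, then writing $w_\ast = \phi(s_\ast)$ and noting $s_0 + w/\tau_2 - s_\ast = (w - w_\ast)/\tau_2$, the function $Q(\cdot,\lambda)$ vanishes to the same order $m$ at $w_\ast$. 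Finally, since $\tau_2$ and $s_0$ are real and $\tau_2 > 0$, we have $\Re(\phi(s)) = \tau_2(\Re(s) - s_0)$, a strictly increasing affine function of $\Re(s)$; hence $\phi$ preserves the ordering of real parts strictly, and in particular sends dominant roots to dominant roots and strictly dominant roots to strictly dominant roots. The only point deserving an explicit word is this multiplicity claim, which is an instance of the standard fact that scaling by a nonzero constant together with a conformal change of variable preserves the order of a zero.
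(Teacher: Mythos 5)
Your proof is correct and follows exactly the route the paper has in mind: the paper states that the proof of this proposition is immediate and omits it, and your direct substitution computation, the identity $Q(\phi(s),\lambda)=\tau_2\Delta(s)$, and the observation that the affine map $s\mapsto\tau_2(s-s_0)$ preserves zero orders and strictly increases with $\Re(s)$ are precisely the details being taken for granted. Nothing is missing.
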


\begin{proof}[Proof of Proposition~\ref{prop1}]
From Proposition~\ref{proposition0}, we have that $s_0$ is a root of multiplicity $3$ of the quasipolynomial $\Delta(\cdot)$ if and only if $s=0$ is a root of multiplicity $3$ of $Q(\cdot,\lambda)$, where $Q(\cdot,\lambda)$ is the quasipolynomial defined in the Proposition~\ref{proposition0}. We also recall that, by Corollary~\ref{coro:max-mult}, the maximal multiplicity of any root of $Q(\cdot, \lambda)$ is $3$, and thus $0$ is a root of multiplicity $3$ of $Q(\cdot, \lambda)$ if and only if $Q(0, \lambda) = \frac{\partial Q}{\partial s}(0, \lambda) = \frac{\partial^2 Q}{\partial \lambda^2}(0, \lambda) = 0$. Hence, $0$ is a root with multiplicity three of $Q(\cdot,\lambda)$ if and only if $\tilde{a}_0$, $\tilde{a}_1$ and $\tilde{a}_2$ satisfy the linear system
\begin{equation}
\label{eq:-1.4}
 \begin{pmatrix}
1 & 1 & 1 \\
0 & -\lambda &-1 \\
0  & \lambda^2  & 1
\end{pmatrix} \begin{pmatrix}
\tilde{a}_0 \\
\tilde{a}_1 \\
\tilde{a}_2
\end{pmatrix} =\begin{pmatrix}
0 \\
-1 \\
0 
\end{pmatrix}.
\end{equation}
An immediate inspection shows that system~\eqref{eq:-1.4} is invertible and that $\tilde{a}_0$, $\tilde{a}_1$ and $\tilde{a}_2$ are uniquely determined. Moreover, we have
\begin{equation}
\label{eq:-1.5}
\tilde{a}_0=-\frac{\lambda+1}{\lambda},\quad \tilde{a}_1=-\frac{1}{\lambda(\lambda-1)}, \quad \tilde{a}_2= \frac{\lambda}{\lambda-1}.
\end{equation}
The expressions on \eqref{th:1_coef} can now be easily obtained by combining \eqref{eq:-1.3} and \eqref{eq:-1.5}.
\end{proof}

\begin{rem}
Under equation~\eqref{eq:-1.5}, the quasipolynomial $Q(\cdot, \lambda)$ from \eqref{eq:-1.2} becomes
  \begin{equation}
 \label{eq:-1}
 Q(s,\lambda)=s-\frac{\lambda+1}{\lambda}-\frac{\lambda}{1-\lambda} e^{-s}+\frac{1}{\lambda(1-\lambda)}e^{-\lambda s}.
 \end{equation}
\end{rem}

\subsection{Proof of Proposition~\ref{prop2}}
\label{appen:proof_prop2}

Proving Proposition~\ref{prop2} amounts to showing that, given $s_0 \in \mathbb R$, if the coefficients of $\Delta$ satisfy condition \eqref{th:1_coef}, then $s_0$ is a strictly dominant root of $\Delta$. Proposition~\ref{proposition0} shows that this is tantamount to prove that, for every $\lambda \in (0, 1)$, the root $0$ of the quasipolynomial of $Q(\cdot, \lambda)$ from \eqref{eq:-1} is strictly dominant. 

To do that, we decompose the proof in two steps. First, we show that either Proposition~\ref{prop2} holds or there exists a branch of nontrivial roots of $Q$ in the right half-plane defined on some interval $(\underline\lambda, 1) \subset (0, 1)$. The second step consists in ruling out the existence of such a branch of roots. We first provide the precise definition of branch of roots and its extensions used in the sequel.

\begin{defn}
\label{def:branch}
Let $Q(\cdot,\lambda)$, $\lambda\in (0,1)$ be the family of quasipolynomials defined in \eqref{eq:-1}. \emph{A branch of roots of $Q$} is any function $s:I\to\mathbb C$ so that $I\subset (0,1)$ is an open interval and $s(\lambda)$ is a root of $Q(\cdot,\lambda)$ for every $\lambda\in I$. If $s:(\underline\lambda, \overline\lambda)\to\mathbb C$ is a branch of roots of $Q$, an extension $\hat s$ of $s$ is a branch of roots of $Q$ with same regularity as $s$, which is defined on  $(\underline\lambda, \hat\lambda)$ where $\hat\lambda \in [\overline\lambda, 1]$ and such that
$s=\hat s$ on $(\underline\lambda, \overline\lambda)$. 
\end{defn}

In other words, an extension of $s$ is always assumed to be an extension to the right of the domain that has same regularity as $s$. When there is no ambiguity, we will also denote extensions of $s$ by the same letter $s$.

In the sequel, we shall also need Hurwitz Theorem, a classical result in complex analysis on the behavior of roots of sequences of analytic functions, which we recall now (see, e.g., \cite[Chapter~VII, Theorem~2.5]{Conway1978Functions}).

\begin{thm}
Let $G \subset \mathbb C$ be open and connected and $(f_n)_{n \in \mathbb N}$ be a sequence of analytic functions defined in $G$ and converging to a nontrival analytic function $f: G \to \mathbb C$ uniformly on every compact subset of $G$. Then, for every open ball $B \subset G$ so that the closure of $B$ is included in $G$, if $f$ has no zeros at the boundary of $B$, then $f_n$ and $f$ have the same number of zeros in $B$ (counted with their multiplicities) for $n$ large enough.
\end{thm}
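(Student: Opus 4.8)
The plan is to deduce the statement from Rouché's theorem, using the hypothesis of locally uniform convergence only to control the size of $f_n - f$ on the boundary circle. First I would fix an open ball $B$ with $\overline B \subset G$ and observe that $\partial B$ is a compact subset of $G$ on which $f$ is continuous and, by assumption, zero-free; since $f$ is nontrivial this is consistent, and compactness then yields
\[
m := \min_{z \in \partial B} \abs{f(z)} > 0.
\]

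Next I would exploit the convergence hypothesis. Because $\overline B$ is a compact subset of $G$ and $f_n \to f$ uniformly on compact subsets of $G$, there is an index $N$ such that $\sup_{z \in \partial B} \abs{f_n(z) - f(z)} < m$ for all $n \geq N$. For such $n$ and every $z \in \partial B$ we then have the strict inequality $\abs{f_n(z) - f(z)} < m \leq \abs{f(z)}$. Writing $f_n = f + (f_n - f)$ and applying Rouché's theorem on $B$, I conclude that $f$ and $f_n$ have exactly the same number of zeros in $B$, counted with multiplicities, for every $n \geq N$, which is precisely the assertion.

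The argument is short, and the only point demanding care is the passage from locally uniform convergence to a genuine uniform estimate on $\partial B$; this is legitimate precisely because $\overline B \subset G$ makes $\partial B$ an admissible compact set on which the convergence is uniform. An equivalent but slightly longer route avoids Rouché and applies the argument principle to $f$ and to each $f_n$ separately: one first upgrades the convergence to $f_n' \to f'$ locally uniformly (via the Weierstrass theorem, or Cauchy's integral estimates), combines this with the uniform lower bound $\abs{f_n} \geq m/2$ on $\partial B$ valid for large $n$ to obtain $f_n'/f_n \to f'/f$ uniformly on $\partial B$, and then passes to the limit in the contour integrals
\[
\frac{1}{2\pi i}\oint_{\partial B} \frac{f_n'(z)}{f_n(z)}\,dz \longrightarrow \frac{1}{2\pi i}\oint_{\partial B} \frac{f'(z)}{f(z)}\,dz;
\]
since both sides are nonnegative integers and the left-hand side converges to the right-hand one, the two zero counts coincide for all large $n$. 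I would favor the Rouché-based proof, as it sidesteps the convergence of derivatives entirely, so that the main (and essentially only) obstacle reduces to the uniform-estimate bookkeeping on the boundary.
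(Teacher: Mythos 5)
Your proof is correct. A point of comparison worth noting: the paper does not prove this statement at all --- it is recalled as a classical result (Hurwitz's theorem) with a citation to Conway's textbook, and is then used as a black box in the proofs of Lemma~\ref{lem:branch-not-zero}, Proposition~\ref{prop:extension}, and Lemma~\ref{lem:near_1}. So there is no paper proof to measure yours against; what you have supplied is the standard textbook argument, and it is sound. The key steps are all in order: $\partial B$ is a compact subset of $G$ on which $f$ is continuous and zero-free, so $m = \min_{\partial B} \abs{f} > 0$; locally uniform convergence gives $\sup_{\partial B} \abs{f_n - f} < m \leq \abs{f}$ for $n$ large; and Rouch\'e's theorem (applicable since $\overline{B} \subset G$, so both functions are analytic on a neighborhood of $\overline{B}$ and the boundary circle is an admissible contour) yields equality of the zero counts in $B$. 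One small remark: the hypothesis that $f$ is nontrivial is actually subsumed here by the assumption that $f$ has no zeros on $\partial B$ --- if $f$ vanished identically it would vanish on $\partial B$, so the case is vacuous --- which is why your argument never needs to invoke nontriviality beyond the consistency observation you make. Your alternative sketch via the argument principle is also valid, and your preference for the Rouch\'e route is reasonable, since it avoids establishing convergence of the derivatives.
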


\subsubsection{Branches of roots in the right half-plane}
\label{appen:3}

The main result of this section is Proposition~\ref{prop:extension}, which states that either Proposition~\ref{prop2} holds true or there exists a branch of nontrivial roots in the closed complex right half-plane defined on an interval of the form $(\underline\lambda, 1) \subset (0, 1)$.

We start by a preliminary result showing that any nontrivial root of $Q(\cdot, \lambda_0)$ in the complex right half-plane is necessarily simple and hence gives rise to a branch of roots of $Q$.

\begin{lem}
\label{lem:simple-root}
Let $\lambda_0 \in (0, 1)$ and assume that $s_0 \in \mathbb C$ is a root of $Q(\cdot, \lambda_0)$ with $\Re(s_0) \geq 0$ and $s_0 \neq 0$. Then $s_0$ is a simple root of $Q(\cdot, \lambda_0)$, i.e., $\partial_s Q(s_0, \lambda_0) \neq 0$, and there exists an analytic branch $s:I\to\mathbb C$ of roots of $Q$ such that $\lambda_0\in I$, $s(\lambda_0) = s_0$, and
\begin{equation}\label{eq:derivative}
s'(\lambda_0) = -\frac{(1-2\lambda_0)s(\lambda_0)+2\lambda_0-2\lambda_0 e^{-s(\lambda_0)}-s(\lambda_0) e^{-\lambda_0 s(\lambda_0)}}{\lambda_0(1-\lambda_0)+\lambda_0^2e^{-s(\lambda_0)}-\lambda_0 e^{-\lambda_0 s(\lambda_0)}}.
\end{equation}
\end{lem}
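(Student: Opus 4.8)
The plan is to reduce the entire statement to a single transcendental fact about the scalar equation $e^{-z}=1-z$ in the closed right half-plane, and then read off the branch and its derivative from the holomorphic implicit function theorem.

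First I would record the computation $\partial_s Q(s,\lambda)=1+\frac{\lambda}{1-\lambda}e^{-s}-\frac{1}{1-\lambda}e^{-\lambda s}$ and observe that $\lambda_0(1-\lambda_0)\partial_s Q(s,\lambda_0)=\lambda_0(1-\lambda_0)+\lambda_0^2 e^{-s}-\lambda_0 e^{-\lambda_0 s}$ is exactly the denominator appearing in \eqref{eq:derivative}. Hence the simplicity claim $\partial_s Q(s_0,\lambda_0)\neq 0$ is equivalent to the non-vanishing of that denominator, and it is what guarantees the branch exists. To prove simplicity I would argue by contradiction: if $s_0$ had multiplicity at least two, then $Q(s_0,\lambda_0)=\partial_s Q(s_0,\lambda_0)=0$. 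Writing $u=e^{-s_0}$ and $v=e^{-\lambda_0 s_0}$, the equation $\partial_s Q=0$ gives $v=(1-\lambda_0)+\lambda_0 u$; substituting this into $Q(s_0,\lambda_0)=0$ and using \eqref{eq:-1}, the terms telescope and collapse to the remarkably simple relation $u=1-s_0$, that is $e^{-s_0}=1-s_0$ (and correspondingly $e^{-\lambda_0 s_0}=1-\lambda_0 s_0$).

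Everything then hinges on the following \emph{key claim}: if $\Re(z)\ge 0$ and $e^{-z}=1-z$, then $z=0$. I would prove it by separating real and imaginary parts of $z=x+iy$, which turns the equation into $e^{-x}\cos y=1-x$ and $e^{-x}\sin y=y$. For $y=0$ the first equation reduces to $e^{-x}=1-x$, whose only root with $x\ge 0$ is $x=0$ by strict convexity of $e^{-x}$ above its tangent line $1-x$. For $y\neq 0$ (and, by conjugate symmetry of the solution set, we may take $y>0$), the second equation forces $\sin y>0$ and $e^{-x}=y/\sin y$; but $0<\sin y<y$ yields $y/\sin y>1\ge e^{-x}$ for every $x\ge 0$, a contradiction. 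This excludes any nonzero root in the closed right half-plane, contradicting $s_0\neq 0$ and thereby establishing that $s_0$ is simple.

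Finally, since $Q$ is jointly holomorphic in $(s,\lambda)$ on a neighborhood of $(s_0,\lambda_0)$ (the only singularities of the coefficients are at $\lambda\in\{0,1\}$) and $\partial_s Q(s_0,\lambda_0)\neq 0$, the holomorphic implicit function theorem produces a unique analytic function $s(\lambda)$ with $s(\lambda_0)=s_0$ and $Q(s(\lambda),\lambda)\equiv 0$ near $\lambda_0$; restricting to real $\lambda$ gives the open interval $I$. Differentiating $Q(s(\lambda),\lambda)=0$ at $\lambda_0$ gives $s'(\lambda_0)=-\partial_\lambda Q(s_0,\lambda_0)/\partial_s Q(s_0,\lambda_0)$; computing $\partial_\lambda Q$, multiplying numerator and denominator by $\lambda_0(1-\lambda_0)$, and eliminating the factor $e^{-\lambda_0 s_0}$ through the identity $Q(s_0,\lambda_0)=0$ collapses the numerator to $(1-2\lambda_0)s_0+2\lambda_0-2\lambda_0 e^{-s_0}-s_0 e^{-\lambda_0 s_0}$, which is precisely \eqref{eq:derivative}. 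The genuine obstacle is the key claim: the algebraic elimination and the implicit differentiation are routine, but the fact that $z=0$ is the \emph{only} root of $e^{-z}=1-z$ in the whole closed right half-plane is the non-algebraic ingredient, and it is the estimate $y/\sin y>1$ that closes the argument.
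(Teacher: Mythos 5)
Your proof is correct, and at its core it is the same argument as the paper's: the paper also clears denominators (working with $\tilde Q(s,\lambda)=\lambda(1-\lambda)Q(s,\lambda)$), substitutes the root equation $Q(s_0,\lambda_0)=0$ into the $s$-derivative, and arrives at the factorization $\partial_s \tilde Q(s_0,\lambda_0)=-(1-\lambda_0)\bigl(1-\lambda_0 s_0-e^{-\lambda_0 s_0}\bigr)$, so that simplicity is exactly your key claim about $e^{-z}=1-z$, applied at $z=\lambda_0 s_0$ rather than at $z=s_0$; the branch and formula \eqref{eq:derivative} are then obtained, as in your proposal, from the analytic implicit function theorem and implicit differentiation (your ``collapse'' of the numerator is the identity $\lambda(1-\lambda)\partial_\lambda Q=\partial_\lambda\tilde Q-(1-2\lambda)Q$, whose last term vanishes at the root). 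The one substantive difference is how the transcendental fact is settled. The paper computes only the imaginary part, $(1-\lambda_0)\bigl(\lambda_0\omega_0-e^{-\lambda_0 r_0}\sin(\lambda_0\omega_0)\bigr)$ with $r_0=\Re(s_0)$, $\omega_0=\Im(s_0)$, and invokes the sinc bound $\abs{\sin y}<\abs{y}$; this is conclusive only when $\omega_0\neq 0$, since for a hypothetical real root $r_0>0$ that imaginary part vanishes identically and the paper's proof as written says nothing about it (such roots are excluded only indirectly, e.g.\ via Corollary~\ref{coro:max-mult}, which makes $0$ the unique real root). Your two-case split closes exactly that loophole: the $y\neq 0$ case is the paper's sinc estimate, and the $y=0$ case is disposed of by the tangent-line inequality $e^{-x}>1-x$ for $x>0$. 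So your argument buys a self-contained treatment of the real-root case that the paper's own proof glosses over, at no extra cost.
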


\begin{proof}
Consider $\tilde Q: \mathbb C \times (0, 1) \to \mathbb C$ defined by
\begin{equation}
\label{eq:lem_dp2}
\tilde{Q}(s,\lambda) = \lambda(1-\lambda)Q(s,\lambda) = \lambda(1-\lambda)s-1+\lambda^2-\lambda^2e^{-s}+e^{-\lambda s}.
\end{equation}
For every $s \in \mathbb C$ and $\lambda \in (0, 1)$, we have
\begin{align}
\label{eq:lem_dp4}
\partial_s \tilde{Q}(s,\lambda) & =\lambda(1-\lambda)+\lambda^2e^{-s}-\lambda e^{-\lambda s}, \\
\label{eq:lem_dp8}
\partial_{\lambda} \tilde{Q}(s,\lambda) & = (1-2\lambda)s+2\lambda-2\lambda e^{-s}-s e^{-\lambda s}.
\end{align}
From $\tilde Q(s_0,\lambda_0)=0$, we deduce that
\begin{equation}
\label{eq:lem_branch2}
\lambda_0^2 e^{-s_0} = \lambda_0 (1 - \lambda_0) s_0 - 1 + \lambda_0^2 + e^{-\lambda_0 s_0}.
\end{equation}
Inserting~\eqref{eq:lem_branch2} into \eqref{eq:lem_dp4}, one obtains
\begin{equation}
\label{eq:lem_branch3}
\partial_s Q(s_0,\lambda_0)=- (1 - \lambda_0) (1-\lambda_0s_0-e^{-\lambda_0 s_0}).
\end{equation}
In particular, we have $\Im(\partial_s \tilde Q(s_0, \lambda_0)) = (1 - \lambda_0)\allowbreak \left(\lambda_0\omega_0 - e^{-\lambda_0 r_0} \sin(\lambda_0 \omega_0)\right)$, where $r_0 = \Re(s_0)$ and $\omega_0 = \Im(s_0)$. Since $r_0 \geq 0$ and $(r_0, \omega_0) \neq (0, 0)$, we deduce in particular that $\Im(\partial_s \tilde Q(s_0, \lambda_0)) \neq 0$ thanks to the classical properties of the sinc function, yielding that $s_0$ is a simple root of $Q(\cdot, \lambda_0)$. The existence of the above mentioned branch of roots $s$ is an immediate consequence of the simplicity of the nontrivial root $s$ and the implicit function theorem for analytic functions (see, e.g., \cite[Theorem~2.1.2]{Hormander1990Introduction}), and \eqref{eq:derivative} follows immediately from \eqref{eq:lem_dp4} and \eqref{eq:lem_dp8}.
\end{proof}

We next provide some properties on such analytic branches of roots. We first notice that, as an immediate consequence of Hurwitz Theorem, since $0$ is a triple root of $Q(\cdot, \lambda)$ for every $\lambda \in (0, 1)$, no other branch of roots of $Q$ can pass through the origin.

\begin{lem}
\label{lem:branch-not-zero}
Let $s:I\to\mathbb C$ be a continuous branch of roots of $Q$ and assume that there exists $\lambda_0 \in I$ such that $s(\lambda_0) \neq 0$. Then $s(\lambda) \neq 0$ for every $\lambda \in I$.
\end{lem}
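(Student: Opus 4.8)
The plan is to run a connectedness argument on the interval $I$. Define $A=\{\lambda\in I\mid s(\lambda)=0\}$ and $B=\{\lambda\in I\mid s(\lambda)\neq 0\}$, so that $I=A\sqcup B$. Since $s$ is continuous and $\mathbb C\setminus\{0\}$ is open, the set $B$ is open, and it is nonempty because $\lambda_0\in B$ by hypothesis. Hence, if I can show that $A$ is also open, the connectedness of the interval $I$ forces $A=\emptyset$, which is exactly the claim. So the whole proof reduces to proving that $A$ is open.

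To prove that $A$ is open, I would first recall that, by the construction leading to \eqref{eq:-1} (equivalently, by Proposition~\ref{prop1}), the point $0$ is a root of multiplicity exactly $3$ of $Q(\cdot,\lambda)$ for every $\lambda\in(0,1)$. Now fix $\lambda_1\in A$, so that $s(\lambda_1)=0$. Since the zeros of the nontrivial entire function $Q(\cdot,\lambda_1)$ are isolated, I can choose $\varepsilon>0$ small enough that $0$ is the unique zero of $Q(\cdot,\lambda_1)$ in the closed ball $\overline{B(0,\varepsilon)}$; in particular $Q(\cdot,\lambda_1)$ has no zero on the boundary $\partial B(0,\varepsilon)$. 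The map $\lambda\mapsto Q(\cdot,\lambda)$ is continuous into the space of analytic functions with the topology of uniform convergence on compact sets, as is clear from the explicit expression \eqref{eq:-1}. Hence, applying Hurwitz Theorem along an arbitrary sequence $\lambda_n\to\lambda_1$, I obtain that for all $\lambda$ sufficiently close to $\lambda_1$ the quasipolynomial $Q(\cdot,\lambda)$ has exactly $3$ zeros in $B(0,\varepsilon)$, counted with multiplicity (the number of zeros of $Q(\cdot,\lambda_1)$ there).

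Since $0$ is always a triple root, those three zeros are all located at $0$, so that $0$ is in fact the \emph{only} zero of $Q(\cdot,\lambda)$ in $B(0,\varepsilon)$ once $\lambda$ is close enough to $\lambda_1$. On the other hand, by continuity of $s$ we have $s(\lambda)\to s(\lambda_1)=0$, so $s(\lambda)\in B(0,\varepsilon)$ for $\lambda$ near $\lambda_1$; being a root of $Q(\cdot,\lambda)$, it must then equal $0$. This shows that a whole neighborhood of $\lambda_1$ is contained in $A$, i.e.\ $A$ is open, which completes the argument.

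The only delicate point will be the bookkeeping in the application of Hurwitz Theorem: I must phrase it as ``for $\lambda$ in a neighborhood of $\lambda_1$'' rather than merely along a sequence, which I would do by contradiction. If some sequence $\lambda_n\to\lambda_1$ produced a zero $z_n\neq 0$ in $B(0,\varepsilon)$, then $Q(\cdot,\lambda_n)$ would have at least $4$ zeros (the triple root at $0$ together with $z_n$) in $B(0,\varepsilon)$, contradicting the count of exactly $3$ guaranteed by Hurwitz Theorem for $n$ large. Everything else is routine.
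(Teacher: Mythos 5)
Your proof is correct and follows essentially the same route as the paper's: both hinge on applying Hurwitz Theorem at a parameter value where $s$ vanishes, combined with the fact that $0$ is a triple root of $Q(\cdot,\lambda)$ for \emph{every} $\lambda\in(0,1)$, to conclude that $0$ is the only root of $Q(\cdot,\lambda)$ near the origin for all nearby $\lambda$. The only difference is organizational: you package the connectedness step as an explicit open--open decomposition of $I$, whereas the paper reaches the same contradiction along a sequence converging to a boundary point of the zero set $\{\lambda \in I \mid s(\lambda)=0\}$.
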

\begin{proof}
Assume, to obtain a contradiction, that the set $\{\lambda \in I \suchthat s(\lambda) = 0\}$ is nonempty. Since $s$ is not constantly equal to $0$ over the interval $I$, there exists a sequence $(\lambda_n)_{n \in \mathbb N}$ in $I$ converging to some $\lambda_\ast \in I$ with $s(\lambda_n) \neq 0$ for every $n \in \mathbb N$ and $s(\lambda_\ast) = 0$. Since $0$ is a root of multiplicity three of $Q(\cdot, \lambda_\ast)$, by Hurwitz Theorem, there exist neighborhoods $U \subset \mathbb C$ of $0$ and $V \subset (0, 1)$ of $\lambda_\ast$ such that, for every $\lambda \in V$, $Q(\cdot, \lambda)$ admits exactly three roots (counted with their multiplicity) in $U$. As $0$ is a triple root of $Q(\cdot, \lambda)$ for every $\lambda \in (0, 1)$, we then deduce that, for $\lambda \in V$, $0$ is the only root of $Q(\cdot, \lambda)$ in $U$. However, since $\lambda_n \to \lambda_\ast$ as $n \to +\infty$, we deduce that $\lambda_n \in V$ for $n$ large enough. Since $s$ is continuous, we also have $s(\lambda_n) \in U$ for $n$ large enough. We thus obtain the desired contradiction since $s(\lambda_n) \neq 0$ is a root of $Q(\cdot, \lambda_n)$.
\end{proof}

We now exploit the consequences of P\'olya--Szeg\H{o} bound stated in Corollary~\ref{coro:max-mult} to deduce that continuous branches of nontrivial roots never cross the real axis.

\begin{lem}
\label{lem:cannot-cross-real-axis}
Let $s: I \to \mathbb C$ be a continuous branch of nontrivial roots of $Q$. Then $\Im(s(\lambda)) \neq 0$ for every $\lambda \in I$ and, in particular, the sign of $\Im(s(\lambda))$ is constant.
\end{lem}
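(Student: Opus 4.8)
The plan is to reduce the statement to a fact about \emph{real} roots of the individual quasipolynomials. Since $s$ is a branch of \emph{nontrivial} roots, Lemma~\ref{lem:branch-not-zero} ensures that $s(\lambda) \neq 0$ for every $\lambda \in I$. Consequently, if I can show that for each fixed $\lambda \in (0,1)$ the only real root of $Q(\cdot, \lambda)$ is $0$, then $s(\lambda)$ can never be a real number (a real value would be a nonzero real root, which is excluded), so $\Im(s(\lambda)) \neq 0$ throughout $I$. The assertion that the sign of $\Im(s(\lambda))$ is constant then follows immediately from the continuity of the map $\lambda \mapsto \Im(s(\lambda))$ together with the intermediate value theorem on the interval $I$.

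The heart of the matter is therefore to prove that $x = 0$ is the unique real zero of $Q(\cdot, \lambda)$. I would work with the normalized quasipolynomial $\tilde Q$ from \eqref{eq:lem_dp2}, which has exactly the same zeros as $Q(\cdot,\lambda)$. Fixing $\lambda \in (0,1)$ and restricting to a real argument $x$, one differentiates \eqref{eq:lem_dp4} once more in $s$ to obtain
\[
\partial_s^2 \tilde Q(x,\lambda) = \lambda^2 \bigl(e^{-\lambda x} - e^{-x}\bigr).
\]
Because $\lambda \in (0,1)$, comparing the exponents gives $\partial_s^2 \tilde Q(x,\lambda) > 0$ for $x > 0$ and $\partial_s^2 \tilde Q(x,\lambda) < 0$ for $x < 0$, with vanishing only at $x = 0$. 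Hence $x \mapsto \partial_s \tilde Q(x,\lambda)$ is strictly decreasing on $(-\infty, 0)$ and strictly increasing on $(0, +\infty)$, so it attains a strict global minimum at $x = 0$.

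I would then invoke the fact, already used in Lemma~\ref{lem:branch-not-zero}, that $0$ is a triple root of $Q(\cdot,\lambda)$ for every $\lambda \in (0,1)$; in particular $\partial_s \tilde Q(0,\lambda) = 0$. Combined with the previous step, this forces $\partial_s \tilde Q(x,\lambda) > 0$ for every real $x \neq 0$, so that $x \mapsto \tilde Q(x,\lambda)$ is strictly increasing on $\mathbb R$. A strictly increasing function is injective and therefore has at most one real zero; since $\tilde Q(0,\lambda) = 0$, the unique real zero is $x = 0$. This yields the reduction claimed above and hence the lemma.

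I do not expect a serious obstacle here: the entire argument rests on the elementary sign analysis of $\partial_s^2 \tilde Q$ on the real axis, which becomes transparent once one observes that the two delays enter only through $e^{-x}$ and $e^{-\lambda x}$ with $\lambda \in (0,1)$. The only mild point of care is to note that the single point $x = 0$ where $\partial_s \tilde Q$ vanishes does not spoil strict monotonicity of $\tilde Q$, which is standard.
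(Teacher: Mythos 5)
Your proof is correct, and its skeleton matches the paper's---everything reduces to showing that $0$ is the only real root of $Q(\cdot,\lambda)$, after which nontriviality of the branch (Lemma~\ref{lem:branch-not-zero}) and continuity of $\lambda \mapsto \Im(s(\lambda))$ finish the job---but you establish that core fact by a genuinely different, more elementary route. The paper's proof is a one-liner: $0$ is a root of multiplicity $3$, which is exactly the degree of $Q(\cdot,\lambda)$, so the second assertion of Corollary~\ref{coro:max-mult} (a consequence of the P\'olya--Szeg\H{o} bound, Proposition~\ref{PropPolyaSzego}, applied with $r_\delta = 1$) makes $0$ the unique root of $Q(\cdot,\lambda)$ in the whole strip $\{s \in \mathbb C \suchthat \abs{\Im(s)} < 2\pi\}$, hence in particular on the real axis. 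You instead restrict $\tilde Q(\cdot,\lambda)$ from \eqref{eq:lem_dp2} to real arguments: $\partial_s^2 \tilde Q(x,\lambda) = \lambda^2\bigl(e^{-\lambda x} - e^{-x}\bigr)$ has the sign of $x$ because $0 < \lambda < 1$, so $\partial_s \tilde Q(\cdot,\lambda)$ is decreasing on $(-\infty,0)$, increasing on $(0,+\infty)$, and attains its strict global minimum at $x = 0$, where it vanishes since $0$ is a triple root; hence $\tilde Q(\cdot,\lambda)$ is strictly increasing on $\mathbb R$ and $0$ is its only real zero. Your computations check out, and the two delicate points---strict monotonicity despite the derivative vanishing at a single point, and sign constancy via the intermediate value theorem---are standard and handled correctly. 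The trade-off is this: the paper's route is shorter and yields a strictly stronger exclusion statement (no roots of any kind in a horizontal strip of half-width $2\pi$, not merely no real roots), while yours is self-contained, bypasses the transcendental root-counting machinery entirely, and makes the structural reason for the claim visible, namely that the ordering of the exponents $0 > -\lambda > -1$ pins the minimum of $\partial_s \tilde Q$ on the real line at the triple root.
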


\begin{proof}
 Since $0$ is a root of multiplicity $3$ of $Q(\cdot, \lambda)$, it follows from Corollary~\ref{coro:max-mult} that $0$ is the unique real root of $Q(\cdot, \lambda)$, yielding the first part of the conclusion. The second part is a consequence of the continuity of $\lambda \mapsto \Im(s(\lambda))$.
\end{proof}

We now show a boundedness property of branches of roots of $Q$ which holds away from the extremities $\lambda = 0$ and $\lambda = 1$.

\begin{lem}
\label{lem:bound-branch}
Let $s:I\to\mathbb C$ be a continuous branch of roots of $Q$ and 
assume that $\{\Re(s(\lambda)) \suchthat \lambda \in I\}$ is lower bounded. Then, for every compact set $K \subset (0, 1)$, there exists $C > 0$ such that $\abs{s(\lambda)} \leq C$ for every $\lambda \in I \cap K$.
\end{lem}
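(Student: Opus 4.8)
The plan is to isolate the linear monomial $\lambda(1-\lambda)s$ in the defining equation and bound everything else using the one-sided bound on the real part together with the compactness of $K$. I would work with the normalized quasipolynomial $\tilde Q$ from \eqref{eq:lem_dp2}, rather than $Q$ itself, precisely because multiplying by $\lambda(1-\lambda)$ clears the denominators and exhibits the coefficient of $s$ explicitly.

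First I would fix a constant $M \geq 0$ such that $\Re(s(\lambda)) \geq -M$ for every $\lambda \in I$, which exists by hypothesis. Since $s(\lambda)$ is a root of $Q(\cdot,\lambda)$, it is also a root of $\tilde Q(\cdot,\lambda) = \lambda(1-\lambda)Q(\cdot,\lambda)$, so from \eqref{eq:lem_dp2} the identity $\tilde Q(s(\lambda),\lambda)=0$ rearranges into
\[
\lambda(1-\lambda)\,s(\lambda) = 1 - \lambda^2 + \lambda^2 e^{-s(\lambda)} - e^{-\lambda s(\lambda)}.
\]
The point of isolating the term $\lambda(1-\lambda)s(\lambda)$ is that all the remaining terms on the right-hand side are uniformly bounded in modulus over $\lambda \in I$.

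Next I would estimate that right-hand side. Because $\abs{e^{-s(\lambda)}} = e^{-\Re(s(\lambda))} \leq e^{M}$ and $\abs{e^{-\lambda s(\lambda)}} = e^{-\lambda \Re(s(\lambda))} \leq e^{\lambda M} \leq e^{M}$ for $\lambda \in (0,1)$, and since $0 < \lambda < 1$ gives $\abs{1-\lambda^2}\leq 1$ and $\lambda^2 \leq 1$, the modulus of the right-hand side is at most $1 + 2e^{M}$, a constant independent of $\lambda$. Here is where the hypothesis is used in an essential way: the lower bound on $\Re(s(\lambda))$ controls both exponential factors simultaneously. Then, on the compact set $K \subset (0,1)$, the continuous function $\lambda \mapsto \lambda(1-\lambda)$ attains a strictly positive minimum $\delta > 0$, so dividing the displayed identity by $\lambda(1-\lambda)$ yields $\abs{s(\lambda)} \leq (1 + 2e^{M})/\delta$ for every $\lambda \in I \cap K$. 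This is the claimed bound, with $C := (1 + 2e^{M})/\delta$.

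The estimate is elementary and I do not expect a genuine obstacle; the only subtle point, which also explains why the statement is restricted to compact subsets of $(0,1)$, is that the coefficient $\lambda(1-\lambda)$ of the monomial term degenerates as $\lambda \to 0$ or $\lambda \to 1$. Thus the constant $C$ produced by this argument necessarily blows up near the endpoints, and no bound uniform over all of $(0,1)$ can be extracted this way — consistent with the fact that branches of roots may escape to infinity as $\lambda$ approaches the boundary of the parameter interval.
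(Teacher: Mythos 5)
Your proof is correct and follows essentially the same route as the paper's: both isolate the linear term in the identity $Q(s(\lambda),\lambda)=0$, bound $\abs{e^{-s(\lambda)}}$ and $\abs{e^{-\lambda s(\lambda)}}$ using the lower bound on $\Re(s(\lambda))$, and invoke compactness of $K$ to control the $\lambda$-dependent coefficients. Working with $\tilde Q=\lambda(1-\lambda)Q$ rather than $Q$ only changes where the factor $\lambda(1-\lambda)$ is handled and is not a substantive difference.
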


\begin{proof}
Let $\alpha = \inf\{\Re(s(\lambda)) \suchthat \lambda \in I\}$, $\lambda_{\min} = \inf K$, and $\lambda_{\max} = \sup K$. Then, using the fact that $Q(s(\lambda), \lambda) = 0$ together with \eqref{eq:-1}, we obtain that, for every $\lambda \in I \cap K$, we have
\begin{align*}
\abs{s(\lambda)} & \le \frac{\lambda+1}{\lambda} + \frac{\lambda}{1 - \lambda} \abs*{e^{-s(\lambda)}} + \frac{1}{\lambda(1 - \lambda)} \abs*{e^{-\lambda s(\lambda)}} \\
& \leq \frac{\lambda_{\min} + 1}{\lambda_{\min}} + \frac{\lambda_{\max}}{1 - \lambda_{\max}} e^{-\alpha} + \frac{1}{\lambda_{\min} (1 - \lambda_{\max})} e^{\lambda_{\max} \abs{\alpha}}.
\end{align*}
The latter quantity only depends on $K$ and $\alpha$, and provides thus the required bound.
\end{proof}

Our next result shows that, if a branch of roots crosses the imaginary axis outside of the origin, then it necessarily crosses it from the left to the right.

\begin{lem}
\label{lem:derivative-on-imaginary-axis}
Let $s:I\to\mathbb C$ be an analytic branch of roots of $Q$ and $\lambda_0 \in I$ be such that $\Re(s(\lambda_0)) = 0$ and $s(\lambda_0) \neq 0$. Then
\begin{equation}
\label{eq:lem_dp1}
\Re(s'(\lambda_0)) > 0.
\end{equation}
\end{lem}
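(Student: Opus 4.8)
The plan is to compute $\Re(s'(\lambda_0))$ explicitly from \eqref{eq:derivative} and to show it is strictly positive. Write $s(\lambda_0) = i\omega_0$ with $\omega_0 \in \mathbb R \setminus \{0\}$ (legitimate since $\Re(s(\lambda_0)) = 0$ and $s(\lambda_0) \neq 0$), and set $\mu = \lambda_0 \omega_0$. By Lemma~\ref{lem:simple-root} the root $s(\lambda_0)$ is simple, so $B := \partial_s \tilde Q(s(\lambda_0), \lambda_0) \neq 0$ and $s'(\lambda_0) = -A/B$ with $A := \partial_\lambda \tilde Q(s(\lambda_0), \lambda_0)$. Since $\Re(s'(\lambda_0)) = -\Re(A \overline B)/\abs{B}^2$, it suffices to prove that $\Re(A \overline B) < 0$.

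First I would simplify $A$ and $B$ using the root relation $\tilde Q(i\omega_0, \lambda_0) = 0$. For $B$ this is already done in the proof of Lemma~\ref{lem:simple-root}: equation \eqref{eq:lem_branch3} gives $B = -(1-\lambda_0)(1 - i\mu - e^{-i\mu})$. For $A$, substituting the expression for $\lambda_0^2 e^{-i\omega_0}$ coming from \eqref{eq:lem_branch2} into \eqref{eq:lem_dp8} and simplifying, one gets the compact form
\[
A = \frac{2}{\lambda_0}\bigl(1 - e^{-i\mu}\bigr) - i\omega_0\bigl(1 + e^{-i\mu}\bigr).
\]
Writing $e^{-i\mu} = \cos\mu - i\sin\mu$, separating real and imaginary parts of $A$ and of $B$, and expanding $\Re(A\overline B)$, every term turns out to depend on $\mu$ alone (the factor $\omega_0 = \mu/\lambda_0$ recombining with the prefactors). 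A direct computation, followed by the half-angle identities $1 - \cos\mu = 2\sin^2(\mu/2)$, $1 + \cos\mu = 2\cos^2(\mu/2)$ and $\sin\mu = 2\sin(\mu/2)\cos(\mu/2)$, should collapse the result into a perfect square:
\[
\Re(A\overline B) = -\frac{8(1-\lambda_0)}{\lambda_0}\left(\sin\tfrac{\mu}{2} - \tfrac{\mu}{2}\cos\tfrac{\mu}{2}\right)^2 .
\]
Consequently $\Re(s'(\lambda_0)) = \dfrac{8(1-\lambda_0)}{\lambda_0 \abs{B}^2}\bigl(\sin\frac{\mu}{2} - \frac{\mu}{2}\cos\frac{\mu}{2}\bigr)^2 \geq 0$, with equality if and only if $\tan(\mu/2) = \mu/2$.

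The main obstacle is therefore to rule out the equality case at imaginary-axis crossings; this is where the hypothesis $\lambda_0 \in (0,1)$ must genuinely be used, and I expect it to be the delicate point. The plan is to argue by contradiction: assume $\tan(\mu/2) = \mu/2$ and set $t = \mu/2 = \tan(\mu/2) \neq 0$, so that the Weierstrass formulas give $\cos\mu = (1-t^2)/(1+t^2)$ and $\sin\mu = 2t/(1+t^2)$. Plugging these, together with $\omega_0 = 2t/\lambda_0$, into the two real scalar equations encoded by $\tilde Q(i\omega_0, \lambda_0) = 0$, namely $\lambda_0^2(1-\cos\omega_0) = 1 - \cos\mu$ and $\lambda_0^2 \sin\omega_0 = \sin\mu - (1-\lambda_0)\mu$, one solves for $\cos\omega_0$ and $\sin\omega_0$ explicitly in terms of $t$ and $\lambda_0$. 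Imposing the Pythagorean identity $\cos^2\omega_0 + \sin^2\omega_0 = 1$ then reduces, after clearing denominators and writing $a = 1+t^2$, $b = t^2$ (so that $a - 1 = b$ and $b + 1 = a$), to the single relation $(\lambda_0 a - b)^2 = \lambda_0^2 a - b$, which factors as $a b (\lambda_0 - 1)^2 = 0$. Since $a, b > 0$, this forces $\lambda_0 = 1$, contradicting $\lambda_0 \in (0,1)$. Hence $\tan(\mu/2) \neq \mu/2$ at every crossing and $\Re(s'(\lambda_0)) > 0$, as claimed. I expect the two routine-but-careful trigonometric reductions (the collapse to a perfect square, and the final factorization via the circle identity) to be the only places where errors could creep in, while the conceptual crux is the observation that the equality locus of the derivative and the crossing conditions are incompatible off $\lambda_0 = 1$.
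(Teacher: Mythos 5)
Your proof is correct, and I verified the three computational claims on which it rests: the compact form $A=\frac{2}{\lambda_0}\bigl(1-e^{-i\mu}\bigr)-i\omega_0\bigl(1+e^{-i\mu}\bigr)$ obtained by inserting \eqref{eq:lem_branch2} into \eqref{eq:lem_dp8}, the collapse $\Re(A\overline B)=-\frac{8(1-\lambda_0)}{\lambda_0}\bigl(\sin\tfrac{\mu}{2}-\tfrac{\mu}{2}\cos\tfrac{\mu}{2}\bigr)^2$, and the final factorization $ab(\lambda_0-1)^2=0$ all hold. Your argument has the same skeleton as the paper's: the formula $s'(\lambda_0)=-A/B$ from Lemma~\ref{lem:simple-root}, a perfect-square identity giving $\Re(s'(\lambda_0))\geq 0$, and a contradiction forcing $\lambda_0=1$ in the equality case. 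The difference is which variable carries the computation. The paper uses the two real root equations \eqref{eq:-2}--\eqref{eq:-2bis} to eliminate the terms in $\lambda_0\omega_0$, expressing everything in $\omega_0=\Im(s(\lambda_0))$ and arriving at the square $\bigl(\omega_0\cos\tfrac{\omega_0}{2}-2\sin\tfrac{\omega_0}{2}\bigr)^2$ with equality condition $\tan\tfrac{\omega_0}{2}=\tfrac{\omega_0}{2}$; you eliminate $e^{-i\omega_0}$ instead and work in $\mu=\lambda_0\omega_0$, getting the dual square and the condition $\tan\tfrac{\mu}{2}=\tfrac{\mu}{2}$. In the equality case the paper combines the Pythagorean identity for $(\cos(\lambda_0\omega_0),\sin(\lambda_0\omega_0))$ with the root equations to solve for $\lambda_0$ as a function of $\omega_0$ (\eqref{eq:-3}--\eqref{eq:lem_dp13}) and substitutes; your route --- Weierstrass substitution $t=\tan\tfrac{\mu}{2}=\tfrac{\mu}{2}$ into the circle identity for $(\cos\omega_0,\sin\omega_0)$, collapsing to $ab(\lambda_0-1)^2=0$ --- is algebraically cleaner and leaves fewer places for sign errors. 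Two small points you should make explicit when writing this up: at an equality point one has $\cos\tfrac{\mu}{2}\neq 0$ (otherwise the square equals $\sin^2\tfrac{\mu}{2}=1\neq 0$), so $\tan\tfrac{\mu}{2}$ is defined and the substitution is legitimate; and $t\neq 0$ because $\mu=\lambda_0\omega_0\neq 0$, which is exactly what makes $b=t^2>0$ and hence forces $\lambda_0=1$.
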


\begin{proof}
To simplify the notations in the proof, let $\omega = \Im(s(\lambda_0))$. Note that, by Lemma~\ref{lem:simple-root}, $s(\lambda_0)$ is a simple root of $Q(\cdot, \lambda_0)$. By taking the real and imaginary parts, we have that $Q(i \omega, \lambda_0) = 0$ if and only if
\begin{align}
 \label{eq:-2}
\cos(\lambda_0 \omega)-\lambda_0^2 \cos(\omega)+\lambda_0^2 -1& =0,\\
 \label{eq:-2bis}
\sin(\lambda_0 \omega)-\lambda_0^2 \sin(\omega)+\lambda_0^2 \omega-\lambda_0 \omega& =0.
\end{align}
Let $\tilde Q$ be defined as in the proof of Lemma~\ref{lem:simple-root}. In the sequel, we study the numerator and the denominator of the right-hand side of \eqref{eq:derivative} separately.

\step{1}{Computation of $\partial_s \tilde{Q}(i \omega, \lambda_0)$.}

For $s=i \omega$ and $\lambda=\lambda_0$, one obtains from \eqref{eq:lem_dp4} that
\begin{equation}
\label{eq:lem_dp5}
\begin{split}
\partial_s \tilde{Q}(i \omega,\lambda_0) & = \lambda_0(1-\lambda_0)+\lambda_0^2e^{-i \omega}-\lambda_0 e^{-i \lambda_0 \omega} \\
& = \lambda_0(1-\lambda_0)+\lambda_0^2(\cos(\omega)-i \sin(\omega)) - \lambda_0 (\cos( \lambda_0\omega)-i \sin(\lambda_0\omega)).
\end{split}
\end{equation}
Setting $R_1(\omega, \lambda_0) = \Re(\partial_s \tilde Q(i\omega, \lambda_0))$ and using \eqref{eq:-2}, we compute
\begin{equation}
\label{eq:lem_dp6}
\begin{split}
R_1(\omega,\lambda_0) & = -\lambda_0 \cos(\lambda_0 \omega)+\lambda_0^2 \cos(\omega)-\lambda_0^2+\lambda_0 \\
& = -\lambda_0 (\lambda_0^2 \cos(\omega)-\lambda_0^2+1) +\lambda_0^2 \cos(\omega)-\lambda_0^2+\lambda_0 \\
& = \lambda_0^2(1-\lambda_0)(\cos(\omega)-1).
\end{split}
\end{equation}
Similarly, setting $I_1(\omega, \lambda_0) = \Im(\partial_s \tilde Q(i\omega, \lambda_0))$ and using \eqref{eq:-2bis}, we have
\begin{equation}
\label{eq:lem_dp7}
\begin{split}
I_1(\omega,\lambda_0) & = \lambda_0 \sin(\lambda_0 \omega)-\lambda_0^2 \sin(\omega) \\
& = \lambda_0(\lambda_0^2 \sin(\omega)-\lambda_0^2 \omega+\lambda_0 \omega)-\lambda_0^2 \sin(\omega) \\
& = \lambda_0^2(1-\lambda_0)(\omega - \sin(\omega)).
\end{split}
\end{equation}

\step{2}{Computation of $\partial_\lambda \tilde{Q}(i \omega, \lambda_0)$.}

For $s=i \omega$ and $\lambda=\lambda_0$, one obtains from \eqref{eq:lem_dp8} that
\begin{equation}
\label{eq:lem_dp9}
\begin{split}
\partial_{\lambda} \tilde{Q}(i \omega,\lambda_0) & = (1-2\lambda_0)i \omega+2\lambda_0 -2\lambda_0 e^{-i \omega}-i \omega e^{-i \lambda_0 \omega} \\
& = (1-2\lambda_0)i \omega+2\lambda_0 -2\lambda_0 (\cos(\omega)-i \sin(\omega)) -i \omega (\cos(\lambda_0 \omega) - i \sin(\lambda_0 \omega)).
\end{split}
\end{equation}
Setting $R_2(\omega, \lambda_0) = \Re(\partial_\lambda \tilde Q(i\omega, \lambda_0))$ and using \eqref{eq:-2bis}, we compute
\begin{equation}
\label{eq:lem_dp10}
\begin{split}
R_2(\omega,\lambda_0) & = 2\lambda_0-2 \lambda_0 \cos( \omega) - \omega \sin( \lambda_0\omega) \\
& = 2\lambda_0-2 \lambda_0 \cos( \omega) -\omega (\lambda_0^2 \sin(\omega)-\lambda_0^2\omega+\lambda_0 \omega) \\
& = 2 \lambda_0-2 \lambda_0 \cos(\omega) -\omega \lambda_0^2 \sin(\omega)+\lambda_0^2 \omega^2-\lambda_0 \omega^2.
\end{split}
\end{equation}
Similarly, setting $I_2(\omega, \lambda_0) = \Im(\partial_\lambda \tilde Q(i\omega, \lambda_0))$ and using \eqref{eq:-2}, we have
\begin{equation}
\label{eq:lem_dp11}
\begin{split}
I_2(\omega,\lambda_0) & = (1-2\lambda_0) \omega+2 \lambda_0 \sin( \omega)-\omega \cos(\lambda_0 \omega) \\
& = (1-2\lambda_0) \omega+2 \lambda_0 \sin( \omega) -\omega (\lambda_0^2\cos(\omega)-\lambda_0^2+1) \\
& = 2 \lambda_0(\sin(\omega)-\omega)+\lambda_0^2\omega(1-\cos(\omega)).
\end{split}
\end{equation}

\step{3}{Sign of the real part of $s'(\lambda_0)$.}

From equations \eqref{eq:lem_dp5} and \eqref{eq:lem_dp9}, we deduce that
\begin{equation}
\label{eq:lem_dp12}
\Re(s'(\lambda_0)) = -\frac{R_1(\omega,\lambda_0) R_2(\omega, \lambda_0)+I_1(\omega,\lambda_0) I_2(\omega,\lambda_0)}{\left(R^2_1(\omega,\lambda_0)+I^2_1(\omega,\lambda_0) \right)}.
\end{equation}
In particular, $\Re(s'(\lambda_0))$ has the same sign as the numerator of the right-hand side of \eqref{eq:lem_dp12}. Using \eqref{eq:lem_dp6}, \eqref{eq:lem_dp7}, \eqref{eq:lem_dp10}, and \eqref{eq:lem_dp11}, we have
\begin{equation}
\label{eq:lem_dp12bis}
\begin{split}
\MoveEqLeft[4] -R_1(\omega,\lambda_0) R_2(\omega, \lambda_0) - I_1(\omega,\lambda_0) I_2(\omega,\lambda_0) \\
& = \lambda_0^3 (1-\lambda_0) \bigl((1-\cos(\omega)) (2-2\cos(\omega)-\omega \lambda_0 \sin(\omega)+\lambda_0\omega^2-\omega^2) \\
& \hphantom{{} = {}} + (\sin(\omega)-\omega)(2(\sin(\omega)-\omega) +\lambda_0 \omega (1-\cos(\omega)) \bigr) \\
& = \lambda_0^3 (1-\lambda_0)\bigl(2(1-\cos(\omega))^2 +2(\sin(\omega)-\omega)^2-\omega^2(1-\cos(\omega)) \bigr) \\
& = 2\lambda_0^3 (1-\lambda_0)(\omega \cos(\omega/2)-2\sin(\omega/2))^2.
\end{split}
\end{equation}
Hence $\Re(s'(\lambda_0)) \geq 0$. In order to prove that this inequality is strict, we first use the equality $\cos^2(\lambda_0 \omega) + \sin^2(\lambda_0 \omega) = 1$ together with~\eqref{eq:-2} and \eqref{eq:-2bis} to obtain that
\begin{equation}
\label{eq:-3}
2(\lambda_0+1)(1-\cos(\omega))-(1-\lambda_0)\omega^2-2\sin(\omega)\omega \lambda_0=0.
\end{equation}
Isolating $\lambda_0$, we obtain
\begin{equation}
\label{eq:-4}
\lambda_0 = \frac{\omega^2 - 2(1-\cos(\omega))}{2(1-\cos(\omega))+\omega^2-2\sin(\omega)\omega} = \frac{\omega^2+2(\cos(\omega)-1)}{(\omega-\sin(\omega))^2+(1-\cos(\omega))^2}.
\end{equation}
Let us now assume, to obtain a contradiction, that $\Re(s'(\lambda_0)) = 0$. Hence, by \eqref{eq:lem_dp12bis}, we have $\tan(\frac{\omega}{2}) = \frac{\omega}{2}$ and
\[(1-\cos(\omega))^2 + (\sin(\omega)-\omega)^2 = \frac{\omega^2}{2}(1-\cos(\omega)).\]
Inserting the above into \eqref{eq:-4}, we get
\begin{equation}
\label{eq:lem_dp13}
\lambda_0 = \frac{2(\omega^2+2(\cos(\omega)-1))}{\omega^2(1-\cos(\omega))}.
\end{equation}
On the other hand, the equality $\tan(\frac{\omega}{2}) = \frac{\omega}{2}$ implies that
\[
\frac{\omega^2}{4} = \tan^2(\tfrac{\omega}{2}) = \frac{1 - \cos(\omega)}{1 + \cos(\omega)},
\]
i.e.,
\[
4(\cos(\omega) - 1) = -\omega^2(1 + \cos(\omega)).
\]
Inserting the above into the numerator of \eqref{eq:lem_dp13}, we deduce that $\lambda_0 = 1$, yielding the required contradiction. Hence, \eqref{eq:lem_dp1} is proved.
\end{proof}

Using the previous result, we can now show that, if a branch is in the right half-plane at some $\lambda_0$, it must remain there for all larger $\lambda$ and it will be defined until $\lambda = 1$.

\begin{prop}
\label{prop:extension}
Let $s:(\underline\lambda, \overline\lambda)\to\mathbb C$ be an analytic branch of roots of $Q$ and $\lambda_0 \in (\underline\lambda, \overline\lambda)$ be such that $\Re(s(\lambda_0)) \geq 0$. Then $s$ admits a unique extension (still denoted by $s$) defined on $(\underline\lambda, 1)$. In addition, this extension satisfies $\Re(s(\lambda)) \geq 0$ for every $\lambda \in (\underline\lambda, 1)$. Moreover, either $s\equiv 0$ on $(\underline\lambda, 1)$ or $s(\lambda)\neq 0$ for every $\lambda \in (\underline\lambda, 1)$.
\end{prop}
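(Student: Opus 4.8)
The plan is to treat the trivial branch and the nontrivial branches separately, to show that the closed right half-plane is invariant as $\lambda$ increases, and then to continue the branch all the way to $\lambda=1$ by an implicit-function-theorem argument combined with the boundedness of Lemma~\ref{lem:bound-branch} and Hurwitz Theorem. First, if $s(\lambda_0)=0$, then Lemma~\ref{lem:branch-not-zero} forces $s\equiv0$ on the whole domain, and the unique extension is the trivial branch $s\equiv0$ on $(\underline\lambda,1)$, which satisfies all the conclusions. So from now on I assume $s(\lambda_0)\neq0$; by Lemma~\ref{lem:branch-not-zero}, $s$ is then nowhere zero, and this remains true for every extension, giving the claimed dichotomy automatically.

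The first substantial step is to show that $\Re(s)\ge0$ is preserved as $\lambda$ increases. Writing $g=\Re\circ s$, the key point is that $g$ can only reach the value $0$ transversally and from below: at any $\lambda$ with $g(\lambda)=0$ we have $s(\lambda)\neq0$ (nontrivial branch), so Lemma~\ref{lem:derivative-on-imaginary-axis} gives $g'(\lambda)>0$. Hence $g$ cannot pass from nonnegative to negative values: if $g(\lambda_0)\ge0$ but $g(\lambda_1)<0$ for some $\lambda_1>\lambda_0$ in the domain, then $\lambda_\ast=\sup\{\lambda\in[\lambda_0,\lambda_1]\suchthat g(\lambda)\ge0\}$ satisfies $g(\lambda_\ast)=0$ by continuity, and $g'(\lambda_\ast)>0$ forces $g>0$ immediately to the right of $\lambda_\ast$, contradicting the definition of $\lambda_\ast$. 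Thus the closed right half-plane is forward invariant along the branch and $\Re(s(\lambda))\ge0$ for every $\lambda\ge\lambda_0$; since the conclusion concerns the interval $(\underline\lambda,1)$ with right endpoint $1$, it is precisely this rightward invariance that produces a branch lying in the closed right half-plane.

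The main obstacle is the continuation up to $\lambda=1$. Let $\hat\lambda\in(\lambda_0,1]$ be the supremum of the right endpoints of analytic extensions of $s$ that are branches of roots. As long as the branch remains nonzero with $\Re(s)\ge0$, Lemma~\ref{lem:simple-root} guarantees that each root $s(\lambda)$ is simple, so the implicit function theorem continues the branch analytically; hence $s$ is defined and analytic on $(\underline\lambda,\hat\lambda)$. Suppose, for contradiction, that $\hat\lambda<1$. By the previous step, $\Re(s)\ge0$ on $(\lambda_0,\hat\lambda)$, so $\Re(s)$ is bounded below there; applying Lemma~\ref{lem:bound-branch} with a compact set $K\subset(0,1)$ containing a neighborhood of $\hat\lambda$ yields a bound $\abs{s(\lambda)}\le C$ on $(\lambda_0,\hat\lambda)$. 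Consequently the cluster set of $s(\lambda)$ as $\lambda\to\hat\lambda^-$ is a nonempty, compact, connected set contained in the discrete zero set of $Q(\cdot,\hat\lambda)$, hence a single point $s_\ast$ with $\Re(s_\ast)\ge0$; in other words $\lim_{\lambda\to\hat\lambda^-}s(\lambda)=s_\ast$ exists.

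It then remains to re-launch the branch at $s_\ast$. First, $s_\ast\neq0$: otherwise some sequence $s(\lambda_n)\to0$ with $\lambda_n\to\hat\lambda$ and $s(\lambda_n)\neq0$ would contradict Hurwitz Theorem, since $0$ is a triple root of $Q(\cdot,\lambda)$ for every $\lambda$, so on a small ball $U$ around $0$ the function $Q(\cdot,\lambda)$ has, for $\lambda$ near $\hat\lambda$, exactly three roots in $U$, all equal to $0$, leaving no room for the nonzero roots $s(\lambda_n)\in U$. Thus $s_\ast$ is a nonzero root with $\Re(s_\ast)\ge0$, hence simple by Lemma~\ref{lem:simple-root}, and the implicit function theorem furnishes a unique analytic branch through $(\hat\lambda,s_\ast)$; since $s(\lambda)\to s_\ast$, this branch coincides with $s$ to the left of $\hat\lambda$ and extends it analytically beyond $\hat\lambda$, contradicting the maximality of $\hat\lambda$. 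Therefore $\hat\lambda=1$. Uniqueness of the extension follows from the local uniqueness of the branch through a simple root together with the identity theorem, and the dichotomy between $s\equiv0$ and $s$ nowhere zero is exactly Lemma~\ref{lem:branch-not-zero}, which completes the plan.
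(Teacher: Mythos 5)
Your proof is correct and follows the same overall architecture as the paper's: the trivial branch is dispatched first, Lemma~\ref{lem:branch-not-zero} gives the dichotomy, Lemma~\ref{lem:derivative-on-imaginary-axis} yields rightward invariance of the closed right half-plane via a first-crossing argument, and the extension up to $\lambda=1$ is obtained by a maximal-interval contradiction resting on Lemma~\ref{lem:bound-branch} and Lemma~\ref{lem:simple-root}. You differ in two technical sub-steps, both arguably to your advantage. First, to produce the limit $s_\ast$ at the maximal parameter $\hat\lambda$, the paper bounds $\abs{s'(\lambda)}$ via the quotient formula \eqref{eq:derivative} and concludes by Lipschitz continuity; this tacitly requires a uniform positive lower bound on the denominator $\partial_s \tilde{Q}(s(\lambda),\lambda)$ along the branch, which does not follow from pointwise simplicity alone (one must also rule out, by a compactness-plus-Hurwitz argument, that the branch accumulates at the only possible degenerate point, namely $0$). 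Your cluster-set argument --- the cluster set at $\hat\lambda$ is nonempty, compact, connected, and contained in the discrete zero set of $Q(\cdot,\hat\lambda)$, hence a single point --- avoids derivatives entirely and sidesteps that delicate point. Second, for uniqueness of the extension you invoke the identity theorem for real-analytic functions, where the paper argues via Hurwitz Theorem; since Definition~\ref{def:branch} requires extensions to be analytic, your route is the more direct one. One caveat applies equally to both proofs: what is actually established is $\Re(s(\lambda)) \geq 0$ for $\lambda \geq \lambda_0$, not on all of $(\underline\lambda, 1)$ as the statement literally reads; this weaker form is exactly what is used later in the proof of Proposition~\ref{prop2}, so you are consistent with the paper on this point.
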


\begin{proof}
The case $s(\lambda_0) = 0$ is trivial since, in this case, $s$ can be extended to the function $s: (0, 1) \to \mathbb C$ defined by $s(\lambda) = 0$ for every $\lambda \in (0, 1)$, and this extension is unique by Hurwitz Theorem. Hence, in the sequel, we assume that $s(\lambda_0) \neq 0$.

By Lemma~\ref{lem:branch-not-zero}, we have that any extension of $s$ will never be equal to $0$. We now claim that any extension of $s$ will remain in the closed right-half plane for all $\lambda \geq \lambda_0$ in its domain of definition. Indeed, let us consider an extension $s: I \to \mathbb C$ to an interval $I$ with $(\underline\lambda, \overline\lambda) \subset I$. Assume, to obtain a contradiction, that there exists $\lambda_1 \in I$ with $\lambda_1 > \lambda_0$ such that $\Re(s(\lambda_1)) < 0$. As $\lambda \mapsto \Re(s(\lambda))$ is $C^1$, we deduce that there exists $\lambda_\ast \in [\lambda_0, \lambda_1)$ such that $\Re(s(\lambda_\ast)) = 0$ and $\Re(s'(\lambda_\ast)) \leq  0$. This, however, contradicts Lemma~\ref{lem:derivative-on-imaginary-axis}, proving the claim.

As a consequence of the above and Lemma~\ref{lem:simple-root}, for any extension of $s$, $s(\lambda)$ is a simple root of $Q(\cdot, \lambda)$ for every $\lambda \geq \lambda_0$ in the domain of the extension. Hence, arguing by Hurwitz Theorem, extensions of $s$ necessarily coincide on the intersection of their domains. In particular, if $s$ admits an extension to $(\underline\lambda, 1)$, then it is necessarily unique.

To prove that $s$ admits an extension to $(\underline\lambda, 1)$, let
$$\lambda_{\max} = \sup\{\lambda \in [\lambda_0, 1] \suchthat s \text{ admits an extension to } (\underline\lambda, \lambda)\}$$ and assume, to obtain a contradiction, that $\lambda_{\max} < 1$, and consider an extension of $s$ to $(\underline\lambda, \lambda_{\max})$. Since $[\lambda_0, \lambda_{\max}]$ is compact and $s$ belongs to the closed right half-plane on $[\lambda_0, \lambda_{\max})$, we deduce by Lemma~\ref{lem:bound-branch} that there exists $C > 0$ such that $\abs{s(\lambda)} \leq C$ for every $\lambda \in [\lambda_0, \lambda_{\max})$. By Lemma~\ref{lem:simple-root}, we deduce that, up to increasing $C$, we have $\abs{s'(\lambda)} \leq C$ for every $\lambda \in [\lambda_0, \lambda_{\max})$. In particular, $s(\lambda)$ is Lipschitz continuous on $[\lambda_0, \lambda_{\max})$, and thus $\lim_{\lambda \to \lambda_{\max}} s(\lambda)$ exists, and it is nonzero by Hurwitz Theorem. We can thus extend $s$ to a Lipschitz continuous function on $(\underline\lambda, \lambda_{\max}]$ with $Q(s(\lambda_{\max}), \lambda_{\max}) = 0$. By Lemma~\ref{lem:simple-root}, there exists a neighborhood $I$ of $\lambda_{\max}$ in $(0, 1)$ and an analytic branch $\hat s: I \to \mathbb C$ of roots of $Q$ such that $\hat s(\lambda_{\max}) = s(\lambda_{\max})$. Arguing by Hurwitz Theorem, one deduces that $s$ and $\hat s$ coincide on $(\underline\lambda, \lambda_{\max}] \cap I$, and thus $s$ can be extended to $(\underline\lambda, \lambda_{\max}] \cup I$. This, however, contradicts the maximality of $\lambda_{\max}$, yielding the conclusion.
\end{proof}

\subsubsection{End of the proof of Proposition~\ref{prop2}}
\label{appen:4}

To proceed, it is enough to rule out of the existence of a branch of nontrivial roots of $Q$ in the right half-plane provided by Proposition~\ref{prop:extension}. We start by proving that any such branch is uniformly bounded.

\begin{lem}\label{lem:bdd}
Let $s: (\underline\lambda, 1) \to \mathbb C$ be an analytic branch of nontrivial roots of $Q$ and assume that there exists $\lambda_0 \in (\underline\lambda, 1)$ such that $\Re(s(\lambda_0)) \geq 0$. Then $s$ is bounded on $[\lambda_0, 1)$.
\end{lem}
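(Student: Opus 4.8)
The plan is to argue by contradiction and to reduce everything to the delicate limit $\lambda \to 1$. First I would record that, by Proposition~\ref{prop:extension}, the branch satisfies $\Re(s(\lambda)) \geq 0$ and $s(\lambda) \neq 0$ for every $\lambda \in (\underline\lambda, 1)$; in particular $\{\Re(s(\lambda)) : \lambda \in [\lambda_0, 1)\}$ is bounded below by $0$. Lemma~\ref{lem:bound-branch} then provides a bound for $\abs{s}$ on $[\lambda_0, 1) \cap K$ for every compact $K \subset (0,1)$, hence on every interval $[\lambda_0, \lambda_1]$ with $\lambda_1 < 1$. Thus the only way $s$ could fail to be bounded on $[\lambda_0, 1)$ is through a sequence $\lambda_n \to 1^-$ with $\abs{s(\lambda_n)} \to \infty$. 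Writing $s_n = s(\lambda_n)$, the rest of the argument aims at contradicting the existence of such a sequence.

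The key device is the rescaling $z_n := (1 - \lambda_n) s_n$. Clearing denominators as in \eqref{eq:lem_dp2}, the identity $\tilde{Q}(s_n, \lambda_n) = 0$ reads $\lambda_n(1-\lambda_n) s_n = 1 - \lambda_n^2 + \lambda_n^2 e^{-s_n} - e^{-\lambda_n s_n}$, and since $\Re(s_n) \geq 0$ forces $\abs{e^{-s_n}} \leq 1$ and $\abs{e^{-\lambda_n s_n}} \leq 1$, taking moduli yields $(1-\lambda_n)\abs{s_n} \leq 2/\lambda_0$. Hence $(z_n)$ is bounded, and I may pass to a subsequence with $z_n \to z^\ast$. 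Using $e^{-\lambda_n s_n} = e^{-s_n} e^{z_n}$, the same identity becomes $\lambda_n z_n = 1 - \lambda_n^2 + e^{-s_n}(\lambda_n^2 - e^{z_n})$. Because $\Re(z_n) = (1-\lambda_n)\Re(s_n) \geq 0$ we have $\Re(z^\ast) \geq 0$; and if $\Re(z^\ast) > 0$ then $\Re(s_n) \to +\infty$, so $e^{-s_n} \to 0$ and the right-hand side tends to $0$ while the left-hand side tends to $z^\ast \neq 0$, a contradiction. Therefore $z^\ast = i\beta$ for some $\beta \in \mathbb R$.

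The main case $\beta \neq 0$ is then settled cleanly. Passing to the limit in $\lambda_n z_n = 1 - \lambda_n^2 + e^{-s_n}(\lambda_n^2 - e^{z_n})$, and noting that $\lambda_n^2 - e^{z_n} \to 1 - e^{i\beta} \neq 0$ while $\lambda_n z_n \to i\beta$, I obtain $e^{-s_n} \to c := \frac{i\beta}{1 - e^{i\beta}}$. A direct computation gives $\abs{c} = \frac{\abs{\beta}}{2\abs{\sin(\beta/2)}}$, which is strictly greater than $1$ for $\beta \neq 0$ by the strict form of the sinc inequality $\abs{\sin(\beta/2)} < \abs{\beta/2}$ — the very inequality already exploited in Lemma~\ref{lem:simple-root}. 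This contradicts $\abs{e^{-s_n}} = e^{-\Re(s_n)} \leq 1$, ruling out $\beta \neq 0$.

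The remaining case $z^\ast = 0$ — that is $(1-\lambda_n)s_n \to 0$ while $\abs{s_n} \to \infty$ — is, I expect, the main obstacle. Expanding $\lambda_n^2 - e^{z_n}$ to second order in $z_n$ and dividing the root equation by $1-\lambda_n$ turns it into $s_n\bigl[\lambda_n + e^{-s_n}\bigl(1 + \tfrac{z_n}{2} + o(z_n)\bigr)\bigr] = (1+\lambda_n)(1 - e^{-s_n})$; since the right-hand side stays bounded while $\abs{s_n} \to \infty$, the bracket must tend to $0$, which forces $e^{-s_n} \to -1$, hence $\Re(s_n) \to 0$ and $\Im(s_n) \to \infty$ with $\Im(s_n) \equiv \pi \pmod{2\pi}$ asymptotically. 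Excluding this resonant regime is where the real work lies: I would write $\Im(s_n) = (2k_n+1)\pi + \delta_n$ with $\delta_n \to 0$ and carry out a second-order expansion of the real and imaginary parts of the root equation in the small quantities $\Re(s_n)$, $\delta_n$, and $(1-\lambda_n)\Im(s_n)$, expecting the two resulting asymptotic relations to be incompatible and thereby to produce the final contradiction. This would close the argument and show that $s$ is bounded on $[\lambda_0, 1)$.
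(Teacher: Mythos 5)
Your strategy is genuinely different from the paper's, and it is sound up to the last case: the reduction to sequences $\lambda_n \to 1^-$, the bound $(1-\lambda_n)\abs{s_n} \leq 2/\lambda_0$, the exclusion of $\Re(z^\ast) > 0$, and the sinc argument killing $z^\ast = i\beta$ when $e^{i\beta} \neq 1$ are all correct. One small oversight there: if $\beta \in 2\pi\mathbb{Z}\setminus\{0\}$ you cannot divide by $1 - e^{i\beta}$; but then $\lambda_n^2 - e^{z_n} \to 0$ and $\abs{e^{-s_n}} \leq 1$ make the right-hand side of $\lambda_n z_n = 1 - \lambda_n^2 + e^{-s_n}(\lambda_n^2 - e^{z_n})$ tend to $0$ while the left-hand side tends to $i\beta \neq 0$, so this subcase is ruled out directly; the gap is cosmetic.

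The genuine gap is the resonant case $z^\ast = 0$, which you leave as an expectation, and the mechanism you propose provably cannot close it. Writing $\varepsilon_n = 1 - \lambda_n$, $s_n = x_n + iy_n$ (with $y_n > 0$, say), $y_n = (2k_n+1)\pi + \delta_n$, the second-order expansion you describe yields only two real asymptotic relations, whose leading parts are $-y_n \delta_n + \tfrac{1}{2}\varepsilon_n y_n^2 \to 4$ from the real part and $y_n(x_n - \varepsilon_n) \to 0$ (up to corrections) from the imaginary part, and these are mutually \emph{consistent}: the scaling $\varepsilon_n = y_n^{-2}$, $x_n = \varepsilon_n$, $\delta_n = -\tfrac{7}{2}y_n^{-1}$ satisfies both together with all your constraints $x_n \geq 0$, $\varepsilon_n y_n \to 0$, $y_n \to \infty$. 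There is a structural reason no such local computation can work: the nontrivial roots of $Q(\cdot,1) = s - 2 + e^{-s}(s+2)$ are $2\zeta i$ with $\tan\zeta = \zeta$, and for large $k$ one has $2\zeta_k = (2k+1)\pi - 4/((2k+1)\pi) + o(1/k)$, so \emph{genuine} roots of $Q(\cdot,\lambda)$ accumulate, along a diagonal sequence $\lambda_k \to 1$, exactly in your resonant regime (indeed they satisfy $-y\delta \to 4$), except that their real parts are strictly \emph{negative}, of order $-\zeta_k^2(1-\lambda)^2/6$ as computed at the end of the paper's proof of Proposition~\ref{prop2}. These shadowing roots satisfy every relation an expansion in the small quantities $x_n$, $\delta_n$, $\varepsilon_n y_n$ can produce; the only thing distinguishing them from your hypothetical sequence is the sign of $\Re(s_n)$, which is invisible at the order you expand to and degenerates as $k_n \to \infty$. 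The paper escapes this trap by exploiting precisely the information your compactness argument discards: the \emph{continuity} of the branch. If $\Im(s(\lambda))$ is unbounded, the intermediate value theorem (together with Lemma~\ref{lem:bound-branch}) produces $\lambda_n \to 1$ with $\Im(s(\lambda_n)) = \tfrac{\pi}{2} + 2n\pi$; along that sequence $\sin(y_n) = 1$, and the imaginary part of the root equation reads $\bigl(1 - \sin(\lambda_n y_n)\bigr)/\bigl((1-\lambda_n)y_n\bigr) = -e^{x_n}/\lambda_n + O(1)/y_n$, whose left side is nonnegative while its right side is at most $-1 + o(1)$ because $x_n \geq 0$ --- an immediate contradiction. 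Replacing your arbitrary blow-up sequence by this IVT-selected one is the missing idea; without it, your final case remains open.
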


\begin{proof}
Notice first that, by Proposition~\ref{prop:extension}, we have $\Re(s(\lambda)) \geq 0$ for every $\lambda \in [\lambda_0, 1)$. For $\lambda \in [\lambda_0, 1)$, define $x(\lambda) = \Re(s(\lambda))$ and $y(\lambda) = \Im(s(\lambda))$ and note that $x(\lambda) \geq 0$. Since $Q(\cdot, \lambda)$ is a quasipolynomial with real coefficients, its nonreal roots appear in complex-conjugate pairs and, combining this fact with Lemma~\ref{lem:cannot-cross-real-axis}, we assume, with no loss of generality, that $y(\lambda) > 0$ for every $\lambda \in (\underline\lambda, 1)$.

We first claim that $x(\cdot)$ is bounded on $[\lambda_0, 1)$. Indeed, assume, to obtain a contradiction, that there exists a sequence $(\lambda_n)_{n \in \mathbb N}$ in $[\lambda_0, 1)$ such that $\lambda_n \to 1$ and $x(\lambda_n) \to +\infty$ as $n \to +\infty$. For every $\lambda \in [\lambda_0, 1)$, since $Q(s(\lambda),\lambda) = 0$, we have
\begin{equation}
\label{eq:lem:bor-1}
s(\lambda) - \frac{\lambda+1}{\lambda} = \frac{e^{-\lambda s(\lambda)}}{1-\lambda}\left(\lambda e^{-S(\lambda)} - \frac{1}{\lambda} \right),
\end{equation}
where we have set $S(\lambda) = (1 - \lambda) s(\lambda)$. After multiplying \eqref{eq:lem:bor-1} by $1-\lambda$ and using the fact that $e^{-\lambda_n s(\lambda_n)} \to 0$ as $n \to +\infty$, we deduce that $S(\lambda_n) \to 0$ as $n \to +\infty$. Using a Taylor expansion of the term $e^{-S(\lambda_n)}$, we obtain that
\begin{equation}
\label{eq:lem:bor1}
\begin{split}
 s(\lambda_n) - \frac{\lambda_n + 1}{\lambda_n} & = \frac{e^{-\lambda_n s(\lambda_n)}}{1-\lambda_n} \left(\lambda_n \left(1 - S(\lambda_n) + O\left(S(\lambda_n)^2\right)\right) - \frac{1}{\lambda_n} \right) \\
  & = -e^{-\lambda_n s(\lambda_n)} \biggl(\frac{1}{\lambda_n} + 1 + \lambda_n s(\lambda_n) + s(\lambda_n) O(S(\lambda_n)) \biggr).
\end{split}
\end{equation}
By factoring in the above equation terms containing $s(\lambda_n)$, one gets
\begin{equation}
\label{eq:lem:bor1.5}
s(\lambda_n) = \frac{ 1 - e^{-\lambda s(\lambda_n)}}{1 + \lambda_n e^{-\lambda_n s(\lambda_n)} + e^{ -\lambda_n s(\lambda_n)} O(S(\lambda_n))}\left(1 + \frac{1}{\lambda_n}\right).
\end{equation}
The right-hand side of the above equation converging to $2$ as $n \to +\infty$, we reach a contradiction and prove the claim on $x$.

Since $s$ is a branch of roots, by taking the real and imaginary parts of the identity $(1 - \lambda) Q(s(\lambda), \lambda) = 0$ and multiplying by $e^{x(\lambda)}$, we deduce that, for every $\lambda \in (\underline\lambda, 1)$,
\begin{align}
\label{eq:lem:bor2}
(1-\lambda) e^{x(\lambda)} x(\lambda)-\frac{1-\lambda^2}{\lambda} e^{x(\lambda)} & = \lambda \cos(y(\lambda))-\frac{e^{(1-\lambda) x(\lambda)}}{\lambda}\cos(\lambda y(\lambda)) \\
\label{eq:lem:bor3}
(1-\lambda) e^{x(\lambda)} y(\lambda) & =- \lambda \sin(y(\lambda)) + \frac{e^{(1-\lambda) x(\lambda)}}{\lambda} \sin(\lambda y(\lambda)).
\end{align}
Taking into account that $x$ is bounded, one can simplify \eqref{eq:lem:bor3} to obtain that, as $\lambda \to 1$,
\begin{equation}\label{eq:lem:5bis}
\sin(y(\lambda))-\frac{1}{\lambda^2}\sin(\lambda y(\lambda)) = -\frac{1-\lambda}{\lambda} e^{x(\lambda)} y(\lambda) + O(1-\lambda).
\end{equation}
Using that $1/\lambda^2 = 1 + O(1 - \lambda)$ as $\lambda \to 1$, one obtains
\begin{equation}\label{eq:lem:5}
\sin(y(\lambda))-\sin(\lambda y(\lambda)) = -\frac{1-\lambda}{\lambda} e^{x(\lambda)} y(\lambda) + O(1-\lambda).
\end{equation}
Assuming that $s$ is not bounded means that its imaginary part is not, i.e., the continuous function $\lambda\mapsto y(\lambda)$ is unbounded as $\lambda$ tends to one. Then one can assume with no loss of generality that there exists a sequence $(\lambda_n)_{n \in \mathbb{N}}$ so that 
\[
\lambda_n \xrightarrow[n \to +\infty]{} 1, \qquad y(\lambda_n)=\frac{\pi}2+2n\pi
\]
for $n$ large enough. Considering \eqref{eq:lem:5} along the sequence 
$(\lambda_n)_{n \in \mathbb{N}}$ yields the equation
\[
\frac{1-\sin(\lambda_n y(\lambda_n))}{(1-\lambda_n)y(\lambda_n)}=
-\frac{e^{x(\lambda_n)}}{\lambda_n}+\frac{O(1)}{y(\lambda_n)}
\]
for $n$ large enough. The left-hand side of the above equation is clearly nonnegative while the limsup of the right-hand side, as $n$ tends to infinity, is upper bounded by $-1$. We have reached a contradiction and hence proved that $s$ is bounded on $[\lambda_0,1)$.
\end{proof}

The next step consists in proving that any analytical branch of nontrivial roots of $Q$ in the right half-plane defined in an interval of the form $(\underline\lambda, 1)$ can be extended as an analytic function in a right neighborhood of $1$. For that purpose, one must identify the limit of $Q(\cdot,\lambda)$ as $\lambda$ tends to $1$. A Taylor expansion proves that the quasipolynomial $Q(\cdot,\lambda)$ tends uniformly to the nontrivial holomorphic function $s\mapsto s-2+e^{-s}(s+2)$ on compact sets of $\mathbb{C}$ as $\lambda$ tends to $1$. In the sequel we use $Q(\cdot,1)$ to denote it. Moreover, it has been proved in \cite{MBNC} that $0$ is a dominant root of $Q(\cdot,1)$ of multiplicity $3$ and the other roots are simple and exactly of the form $2 \zeta i$, where $\zeta\in\mathbb R$ is any nontrivial solution of the equation $\tan(x)=x$ with $x\in \mathbb R$.

We have then the following lemma.

\begin{lem}\label{lem:near_1}
Let $s:(\underline\lambda, 1) \to \mathbb C$ be an analytic branch of nontrivial roots of $Q$ and assume that there exists $\lambda_0 \in (\underline\lambda, 1)$ such that $\Re(s(\lambda_0)) \geq 0$. Then there exist $\epsilon>0$ and an analytic function $\hat s: (\underline\lambda, 1 + \epsilon) \to \mathbb C$ such that $Q(\hat s(\lambda),\lambda) = 0$ for $\lambda\in (\underline\lambda, 1+\epsilon)$, $\hat s(1) = 2 \zeta i$ for a nonzero real number $\zeta$, and $s = \hat s$ on $(\underline\lambda, 1)$.
\end{lem}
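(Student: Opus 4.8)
The plan is to show that the bounded branch $s$ converges, as $\lambda \to 1^-$, to a nonzero root of the limiting quasipolynomial $Q(\cdot,1)$, and then to continue it analytically across $\lambda=1$ via the analytic implicit function theorem. First I would record the two ingredients already at our disposal: by Lemma~\ref{lem:bdd}, $s$ is bounded on $[\lambda_0,1)$, and by Proposition~\ref{prop:extension} we have $\Re(s(\lambda))\ge 0$ on $[\lambda_0,1)$. Since $Q(\cdot,\lambda)\to Q(\cdot,1)$ uniformly on compact subsets of $\mathbb C$, any limit point $s_\ast$ of $s(\lambda)$ as $\lambda\to 1^-$ must satisfy $Q(s_\ast,1)=0$ and $\Re(s_\ast)\ge 0$; by the description of $Q(\cdot,1)$ recalled above, $s_\ast$ therefore lies in the discrete set $\{0\}\cup\{2\zeta i : \zeta\in\mathbb R,\ \zeta\neq 0,\ \tan\zeta=\zeta\}$.

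Next I would upgrade \emph{limit point} to \emph{limit}. The $\omega$-limit set $\Omega=\bigcap_{\lambda_0\le a<1}\overline{s([a,1))}$ is a nonempty, compact, connected subset of the discrete set above (nonempty and compact by boundedness, connected as a nested intersection of compact connected sets), hence a single point; consequently $s(\lambda)$ converges to some $s_\ast$ in that set. To exclude $s_\ast=0$ I would apply Hurwitz Theorem around the triple root $0$ of $Q(\cdot,1)$: fixing $\delta>0$ so that $0$ is the only zero of $Q(\cdot,1)$ in $\overline{B(0,\delta)}$, Hurwitz gives that $Q(\cdot,\lambda)$ has exactly three zeros (with multiplicity) in $B(0,\delta)$ for $\lambda$ close to $1$; since $0$ is already a root of multiplicity $3$ of $Q(\cdot,\lambda)$ for every $\lambda$, these three are accounted for by $0$, so the nontrivial root $s(\lambda)$ (nonzero by Lemma~\ref{lem:branch-not-zero}) stays outside $B(0,\delta)$ near $\lambda=1$ and cannot tend to $0$. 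Thus $s_\ast=2\zeta i$ for some nonzero real $\zeta$.

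Finally I would carry out the continuation. The crucial point is that the apparent singularity of $Q$ at $\lambda=1$ is removable: with $\tilde Q(s,\lambda)=\lambda(1-\lambda)Q(s,\lambda)$ entire in $(s,\lambda)$ and $\tilde Q(s,1)\equiv 0$, the factor $(1-\lambda)$ divides $\tilde Q$, so $Q=\tilde Q/(\lambda(1-\lambda))$ extends to a function jointly analytic on $\mathbb C\times U$ for a neighborhood $U$ of $1$, with $Q(s,1)=-\partial_\lambda\tilde Q(s,1)=s-2+(s+2)e^{-s}$ (read off from \eqref{eq:lem_dp8}). Because $s_\ast=2\zeta i$ is a \emph{simple} root of $Q(\cdot,1)$, we have $\partial_s Q(s_\ast,1)\neq 0$, so the analytic implicit function theorem (\cite[Theorem~2.1.2]{Hormander1990Introduction}) produces an analytic map $g$ on some $(1-\epsilon,1+\epsilon)$ with $g(1)=s_\ast$ and $Q(g(\lambda),\lambda)=0$. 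Since $s(\lambda)\to s_\ast$ and the zero set of $Q$ near $(s_\ast,1)$ is exactly the graph of $g$, we get $s(\lambda)=g(\lambda)$ for $\lambda$ close to $1$ from the left, hence on all of $(1-\epsilon,1)$ by the identity theorem; gluing $s$ on $(\underline\lambda,1)$ with $g$ on $(1-\epsilon,1+\epsilon)$ yields the desired analytic $\hat s$ on $(\underline\lambda,1+\epsilon)$.

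The main obstacle will be the passage to the limit, i.e.\ proving that $s(\lambda)$ genuinely \emph{converges} to a single nonzero root rather than merely accumulating on several: this is precisely where boundedness, the connectedness of the $\omega$-limit set inside the discrete root set, and the Hurwitz argument excluding the origin must all be combined. Once convergence to a simple root $2\zeta i$ is established, the removal of the singularity at $\lambda=1$ and the analytic implicit function theorem make the extension routine.
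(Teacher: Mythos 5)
Your proof is correct, and its skeleton is the same as the paper's: show that the bounded branch converges as $\lambda \to 1^-$ to a nontrivial root $2\zeta i$ of the limiting quasipolynomial, extend $Q$ analytically across $\lambda = 1$, and conclude with the analytic implicit function theorem. The differences lie in how you justify each step, and they are worth recording. For the convergence, the paper merely asserts it ``arguing by Hurwitz Theorem,'' whereas you make it rigorous by combining boundedness (Lemma~\ref{lem:bdd}), the discreteness of the zero set of $Q(\cdot,1)$, connectedness of the $\omega$-limit set, and a Hurwitz argument around the triple root at the origin (the same device as in the proof of Lemma~\ref{lem:branch-not-zero}) to rule out the limit $0$; this fills in a step the paper leaves essentially unjustified. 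For the extension of $Q$, the paper argues locally near $(2\zeta i, 1)$, checking that the partial derivatives of $Q$ admit continuous extensions and deducing analyticity, while you factor $(1-\lambda)$ out of the entire function $\tilde Q(s,\lambda) = \lambda(1-\lambda)Q(s,\lambda)$, which vanishes identically at $\lambda = 1$; this yields a globally analytic extension in one stroke and identifies the limit $Q(s,1) = -\partial_\lambda \tilde Q(s,1) = s - 2 + (s+2)e^{-s}$ directly from \eqref{eq:lem_dp8}, avoiding any removable-singularity reasoning in two variables. Finally, for the nondegeneracy hypothesis of the implicit function theorem, you invoke the simplicity of the roots $2\zeta i$ recalled from \cite{MBNC}, whereas the paper proves $\partial_s Q(2\zeta i, 1) \neq 0$ self-containedly by computing it and taking moduli; both are legitimate, the paper's being independent of the cited simplicity statement, yours being shorter given that the statement is recalled just before the lemma.
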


\begin{proof}
By Lemma~\ref{lem:bdd}, $s$ is bounded on $[\lambda_0, 1)$, and thus, arguing by Hurwitz Theorem, $s(\lambda)$ must converge to a nontrivial root $2 \zeta i$ of $Q(\cdot, 1)$ as $\lambda$ tends to $1$.

Let $U$ be an open neighborhood of $(2 \zeta i,1)$ in $\mathbb{C}^2$ and consider the natural extension of $Q$ to $U$ (i.e., $Q(s, \lambda)$ is given by \eqref{eq:-1} if $\lambda \neq 1$ and $Q(s, 1) = s - 2 + e^{-s}(s + 2)$), still denoted by $Q$. A trivial computation from \eqref{eq:-1} shows that the partial derivatives $\partial_s Q(s,\lambda)$ and $\partial_{\lambda} Q(s,\lambda)$ exist and are analytic for $(s,\lambda) \in U \setminus(2 \zeta i,1)$. Moreover, $\partial_s Q(s,\lambda)$ and $\partial_{\lambda} Q(s,\lambda)$ admit  limits as $(s,\lambda)$ converges to $(2 \zeta i,1)$, so that $\partial_s Q$ and $\partial_{\lambda} Q$ are continuous on $U$. We deduce that $Q(\cdot,\cdot)$ is analytic on $U$. In addition, we compute $\partial_s Q( 2 \zeta i,1)=1-e^{-2 \zeta i}(1-2 \zeta i)$, and thus $\partial_s Q( 2 \zeta i,1) \neq 0$ since otherwise $1=e^{-2 \zeta i}(1-2 \zeta i)$ and one would get that $\zeta=0$ after taking the modulus. The conclusion of the lemma follows by applying the analytic implicit function theorem.
\end{proof}

We are now in position to conclude the proof of Proposition~\ref{prop2}.

\begin{proof}[Proof of Proposition~\ref{prop2}]
By Proposition~\ref{proposition0}, one is left to show that, for every $\lambda \in (0, 1)$, the root $0$ of the quasipolynomial of $Q(\cdot, \lambda)$ from \eqref{eq:-1} is strictly dominant. Assume, to obtain a contradiction, that this is not the case, and let $\lambda_0 \in (0, 1)$ and $s_0 \in \mathbb C$ be such that $Q(s_0, \lambda_0) = 0$, $s_0 \neq 0$, and $\Re(s_0) \geq 0$. Combining Lemma~\ref{lem:simple-root}, Proposition~\ref{prop:extension}, and Lemma~\ref{lem:near_1}, there exist $\underline\lambda \in (0, \lambda_0)$, $\overline\lambda > 1$, and an analytic function $s: (\underline\lambda, \overline\lambda) \to \mathbb C$ such that $Q(s(\lambda), \lambda) = 0$ for every $\lambda \in (\underline\lambda, \overline\lambda)$, $\Re(s(\lambda)) \geq 0$ for every $\lambda \in [\lambda_0, 1)$, and $s(1) = 2 \zeta i$ for some $\zeta \in \mathbb R^\ast$.

Let us compute $s'(1)$ and $s''(1)$. Since $s$ is analytic, we have
\begin{equation}\label{eq:serie_entiere1}
s(\lambda)=\alpha_0+\alpha_1(1-\lambda)+\alpha_2(1-\lambda)^2+O\left((1-\lambda)^3\right)
\end{equation}
for $\lambda$ in a neighborhood of $1$, where $\alpha_0 = 2 \zeta i$, $\alpha_1 = -s'(1)$, and $\alpha_2 = \frac{1}{2} s''(1)$. In particular, we compute
\begin{equation}
\label{eq:serie_entiere1b}
e^{-s(\lambda)} = e^{-\alpha_0} \Biggl(1 - \alpha_1 (1 - \lambda) + \left(\frac{\alpha_1^2}{2} - \alpha_2\right) (1 - \lambda)^2 + O\left((1 - \lambda)^3\right)\Biggr).
\end{equation}
We also expand \eqref{eq:-1} in a neighborhood of $\lambda = 1$, obtaining
\begin{equation}
\label{eq:serie_entiere2}
\begin{split}
Q(s, \lambda) = {} & s - 2 + e^{-s}(s + 2) + \left(\left(1 + s + \frac{s^2}{2}\right)e^{-s} - 1\right) (1 - \lambda) \\
&{} + \left(\left(1 + s + \frac{s^2}{2} + \frac{s^3}{6}\right)e^{-s} - 1\right) (1 - \lambda)^2 + O\left((1-\lambda)^3\right),
\end{split}
\end{equation}
where $O\left((1-\lambda)^3\right)$ is uniform with respect to $s$ on compact subsets of $\mathbb C$. Inserting \eqref{eq:serie_entiere1} and \eqref{eq:serie_entiere1b} in \eqref{eq:serie_entiere2}, one obtains that, as $\lambda \to 1$,
\begin{equation}
\label{eq:serie_entiere3}
\begin{split}
0=Q(s(\lambda),\lambda)= {} & (\alpha_0+2)e^{-\alpha_0}+\alpha_0-2 + \left( \tfrac{\alpha_0^2+2\alpha_0+2-2\alpha_1(1+\alpha_0)}{2}e^{-\alpha_0} +\alpha_1-1 \right) (1-\lambda) \\
& {} + \left( \tfrac{\alpha_0^3+ 3(1 - \alpha_1) \alpha_0^2 + 3(2 - 2\alpha_2 + \alpha_1^2)\alpha_0 + 6 (1 - \alpha_2)}{6}e^{-\alpha_0} +\alpha_2-1 \right) (1-\lambda)^2 \\
& {} + O\left((1-\lambda)^3\right).
\end{split}
\end{equation}
We deduce from \eqref{eq:serie_entiere3} that
\begin{equation}
\label{eq:serie_entiere4}
(\alpha_0+2)e^{-\alpha_0}+\alpha_0-2=0, \quad \alpha_1=\frac{\alpha_0}{2}, \quad \alpha_2=\frac{\alpha_0(\alpha_0+10)}{24}.
\end{equation}
As $\alpha_0 = 2 \zeta i$ with $\zeta \in \mathbb R^\ast$, $\alpha_1 = -s'(1)$, and $\alpha_2 = \frac{1}{2} s''(1)$, it follows from \eqref{eq:serie_entiere4} that $\Re(s'(1))=0$ and $ \Re(s''(1)) = -\frac{\zeta^2}{3} < 0$, yielding that $ \Re(s(\lambda))<0$ for $\lambda$ 
in a real neighborhood of $1$, in contradiction with the fact that $\Re(s(\lambda)) \geq 0$ for every $\lambda \in [\lambda_0, 1)$.
\end{proof}
\end{appendix}
\end{document}